\newtheorem{theorem}{Theorem}
\newtheorem{lemma}{Lemma}
\newtheorem{corollary}{Corollary}
\newtheorem{proposition}{Proposition}
\newtheorem*{example*}{Example}
\newtheorem*{remark*}{Remark}
\def\Z{\mathbb{Z}}
\def\Q{\mathbb{Q}}
\def\OK{\mathcal{O}_K}
\def\eps{\varepsilon}
\def\I{\mathcal{I}}
\DeclareMathOperator{\sgn}{sgn}
\def\ndiv{\not |\;}
\title{Statistics of genus numbers of cubic fields}
\author{Kevin J. McGown and Amanda Tucker}
\begin{document}

\maketitle

\begin{abstract}
We prove that approximately $96.23\%$ of cubic fields, ordered by discriminant, have genus number one,
and we compute the exact proportion of cubic fields with a given genus number.
We also compute the average genus number.
Finally, we show that a positive proportion of totally real cubic fields with genus number one fail to be norm-Euclidean.
\end{abstract}

\section{Introduction}

The genus theory of algebraic number fields can be traced back to
Gauss's celebrated work on binary quadratic forms, and has its
roots in earlier work of Euler, Lagrange, and others.
It was Hasse who first defined the genus field of a quadratic extension when
he reproved a classical theorem of Gauss using class field theory~\cite{hasse},
and Leopoldt later defined the genus field of any absolutely abelian extension~\cite{leopoldt}.
The definition of the genus field of a general number field, which we give forthwith, is due to Fr\"ohlich~\cite{frohlich}.

The genus field of a number field $K$ is defined to be the maximal extension $K^*$ of $K$ that is unramified at all finite primes and is a compositum of the form $Kk^*$ where $k^*$ is absolutely abelian.  The genus number of $K$ is defined as $g_K=[K^*:K]$.
It follows right away that $g_K$ divides $h^+_K$, the narrow class number of $K$.
Since the class number $h_K$ and the narrow class number $h_K^+$ differ by a power of $2$ and the genus number of a cubic field is a power of $3$ (see Theorem~\ref{T:frohlich}), it follows that $g_K$ divides $h_K$ when $K$ is cubic.


The class number is among the most important invariants associated to a number field,
but it is very difficult to study.
Conjecturally, its behavior at the ``good'' primes is
governed by the (modified) heuristics of Cohen--Lenstra--Martinet (see~\cite{cohen.lenstra, cohen.martinet}).
By contrast, the genus number (whose support is at ``bad'' primes) does not behave ``randomly'' and  is therefore more amenable to study. 
It is very natural to ask about the density of genus number one fields
among all number fields of a fixed degree and signature.
In the present investigation, we will discuss the situation for cubic fields,
as this is the simplest situation where this question has not been previously addressed.

Let $K$ be a cubic field.
If $K$ is cyclic, then $g_K=3^{e-1}$, where $e$ is the number of odd prime factors of the discriminant $\Delta$ of $K$;
it follows right away that $0\%$ of cyclic cubic fields have genus number one,
and that the average genus number in this setting is infinite.
These same statistical questions become more subtle when one does not impose the restriction
that $K$ is Galois.  In fact, since $0\%$ of cubic fields are cyclic, the aforementioned facts have
little bearing on the answers when one considers the collection of all cubic fields.
In this paper, we show that roughly $96.23\%$ of cubic fields have $g_K=1$.
In addition, we prove that the average genus number is roughly $1.0785$.

Let $\mathcal{F}$ denote the collection of all cubic fields $K$
with $g_K=1$, and write $\mathcal{F}^+$, $\mathcal{F}^-$ to denote the subsets of $\mathcal{F}$
consisting of fields with positive and negative discriminants, respectively.
Set $N^\pm(X)=\#\{K\in\mathcal{F}^\pm: |\Delta|\leq X\}$ and define constants
$n^+=6$ and $n^-=2$.  (Note that we will always count cubic fields up to isomorphism.)

In Section \ref{genus1}, we prove our main result:
\begin{theorem}\label{T:1}
\begin{align*}
N^\pm(X)
&=
\frac{29}{54n^\pm\zeta(2)}\prod_{\substack{p\equiv 2\pmod{3}}} \left(1+\frac{1}{p(p+1)}\right)X+O\left(X^{16/17+\eps}\right)
\end{align*}
\end{theorem}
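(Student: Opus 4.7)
My plan is to recast the condition $g_K=1$ as an infinite family of local conditions on $K$ and then carry out a sieve built on the Davenport--Heilbronn theorem.

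First, by Theorem~\ref{T:frohlich} the genus number $g_K$ admits a product decomposition indexed by the rational primes, with each factor depending only on $K_p = K\otimes_\Q \Q_p$ and equal to $1$ whenever $p$ is unramified in $K$. Hence $g_K=1$ is equivalent to a family $(C_p)$ of purely local conditions on the completions of $K$, whose shape depends on the splitting behavior of $p$ in $\Q(\zeta_3)$ and thus falls into three cases: $p=3$, $p\equiv 1\pmod 3$, and $p\equiv 2\pmod 3$.

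Next, for an auxiliary parameter $Y$, I would apply the Davenport--Heilbronn theorem with local conditions --- in the quantitatively explicit form of Belabas--Bhargava--Pomerance, together with refinements due to Bhargava--Shankar--Tsimerman and Taniguchi--Thorne --- to count cubic fields $K$ with $|\Delta_K|\leq X$ satisfying $C_p$ for every prime $p\leq Y$. This yields a main term of the shape
\[
\frac{X}{2n^\pm\zeta(3)}\prod_{p\leq Y}\delta_p,
\]
where $\delta_p$ is the local density of cubic \'etale $\Q_p$-algebras verifying $C_p$, weighted as in the Davenport--Heilbronn mass formula. The heart of the step is the explicit evaluation of $\delta_p$ at each prime, after which the identities $\zeta(2)^{-1}=\prod_p(1-p^{-2})$ and $\zeta(3)^{-1}=\prod_p(1-p^{-3})$ allow one to rewrite the resulting Euler product in the compact closed form given in the theorem. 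The contributions from primes $p\equiv 1\pmod 3$ will combine neatly with the $\zeta(3)^{-1}$ normalization, leaving no Euler product behind at such primes; those from primes $p\equiv 2\pmod 3$ will produce the visible factor $1+\tfrac{1}{p(p+1)}$; and the prime $p=3$ will produce the rational prefactor $29/54$, whose computation is the most delicate because of the wild ramification.

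Finally, I would estimate the contribution of cubic fields failing $C_p$ at some prime $p>Y$ via a uniform upper bound of the form $\#\{K:|\Delta_K|\leq X,\,p\mid\Delta_K\}\ll X/p^{1+\delta}$ (Belabas--Bhargava--Pomerance), whose sum over $p>Y$ contributes $O(X/Y^\delta)$. Balancing this tail against the Davenport--Heilbronn error from Step~2, which grows polynomially in $Y$, and optimizing over $Y$ will produce the stated error exponent $16/17+\eps$. The principal obstacle is precisely this balance: one needs a uniform estimate sharp enough to truncate the sieve economically, together with a Davenport--Heilbronn error term whose dependence on the number of imposed local conditions is mild enough to leave room for the optimization. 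The explicit local computation at $p=3$ that pins down the constant $29/54$ is a secondary but essential technical step.
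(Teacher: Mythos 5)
Your overall strategy---recasting $g_K=1$ as local conditions and sieving with the strengthened Davenport--Heilbronn counts of Taniguchi--Thorne/Bhargava--Shankar--Tsimerman---is the same as the paper's, and your description of how the main term assembles is correct in outline: the factors at $p\equiv 1\pmod{3}$ convert $\zeta(3)^{-1}$ into $\zeta(2)^{-1}$, the primes $p\equiv 2\pmod{3}$ leave the factor $1+\frac{1}{p(p+1)}$, and the prime $3$ gives the rational constant. Two caveats, though. First, Fr\"ohlich's theorem gives $g_K=3^{e-1}$ for cyclic $K$, so $g_K$ is \emph{not} a product of local factors there and ``$g_K=1$'' is not a purely local condition on $K\otimes\Q_p$; one must discard cyclic fields, which is harmless only because there are $O(X^{1/2})$ of them---this needs to be said. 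Second, the condition at $3$ (``not: $3$ totally ramified with $d\equiv 1\pmod 3$'', equivalently $\Delta\not\equiv 3^4\pmod{3^5}$) requires an actual local density computation (the paper finds $3$-adic density $16/2187$, i.e.\ a relative factor $1/117$, whence $\tfrac{13}{24}-\tfrac{1}{216}=\tfrac{29}{54}$); you flag this but the constant is part of the statement being proved.

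The genuine gap is quantitative: your claim that the error in the count with conditions imposed at \emph{every} $p\le Y$ ``grows polynomially in $Y$.'' In Theorem~1.3 of Taniguchi--Thorne (or Theorem~7 of BST) the error grows polynomially in the \emph{modulus} of the imposed conditions, and imposing a condition at every prime $p\le Y$ forces modulus $\prod_{p\le Y}p=e^{(1+o(1))Y}$, i.e.\ an error exponential in $Y$. That caps $Y$ at roughly $\log X$, and then your tail bound yields only an error of size about $X/\log X$, nowhere near $X^{16/17+\eps}$. (This is exactly the phenomenon in Section~6 of the paper, where conditions at all primes below $F$ produce errors of the shape $e^{4F/17}X^{16/17+\eps}$ and only logarithmic-size parameters are affordable.) To win a power saving one must exploit that at $p\equiv 1\pmod 3$ the \emph{forbidden} event (total ramification) is small, and sieve by inclusion--exclusion so that each term carries only the modulus of the primes where total ramification is imposed: the paper writes $N^\pm(f;X)=\sum_{(r,3f)=1}\mu(r)M^\pm(rf;X)$, uses the error $O\bigl((rf)^{16/9}X^{7/9+\eps}\bigr)$ for $r\le Y$, and controls $r>Y$ with the uniform bound $M^\pm(rf;X)=O\bigl((rf)^{-2+\eps}X\bigr)$ (Lemma~3.4 of Taniguchi--Thorne); balancing $Y^{25/9}X^{7/9+\eps}$ against $Y^{-1+\eps}X$ gives $Y\approx X^{1/17}$ and the exponent $16/17$. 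Relatedly, your stated tail input $\#\{K:|\Delta_K|\le X,\ p\mid\Delta_K\}\ll X/p^{1+\delta}$ is false as written (ramification at $p$ has density about $1/p$); the correct input concerns \emph{total} ramification, i.e.\ $p^2\mid\Delta$, with exponent $2-\eps$, and without that strength the optimization cannot reach $16/17$.
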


\begin{corollary}\label{C:1}
The proportion of cubic fields with genus number one
(of positive or negative discriminant)
equals
\[
\frac{29\,\zeta(3)}{27\,\zeta(2)}\prod_{\substack{p\equiv 2\pmod{3}}} \left(1+\frac{1}{p(p+1)}\right)
\,.
\]
Consequently, roughly $96.23009\%$ of totally real cubic fields and $96.23009\%$ of complex cubic fields have genus number one.
\end{corollary}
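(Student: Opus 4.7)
The plan is to derive Corollary~\ref{C:1} from Theorem~\ref{T:1} by taking the ratio against the total count of cubic fields of the specified signature. The denominator is supplied by the classical Davenport--Heilbronn theorem, which asserts that the number of cubic fields $K$ (up to isomorphism) with $|\Delta|\leq X$ is asymptotically
\[
N^\pm_{\mathrm{all}}(X) = \frac{1}{2n^\pm\zeta(3)}\,X + o(X),
\]
where $n^+=6$ and $n^-=2$ are precisely the normalizing constants used in Theorem~\ref{T:1}. (Davenport--Heilbronn give $\frac{1}{12\zeta(3)}X$ and $\frac{1}{4\zeta(3)}X$ in the totally real and complex cases, respectively, matching these normalizations.)

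Dividing the main term of Theorem~\ref{T:1} by $N^\pm_{\mathrm{all}}(X)$, the factors of $n^\pm$ cancel, and using the error term $O(X^{16/17+\eps})$ from Theorem~\ref{T:1} against the linear denominator yields
\[
\lim_{X\to\infty}\frac{N^\pm(X)}{N^\pm_{\mathrm{all}}(X)} = \frac{29\,\zeta(3)}{27\,\zeta(2)}\prod_{p\equiv 2\pmod{3}}\!\left(1+\frac{1}{p(p+1)}\right),
\]
which is precisely the closed-form density asserted in the corollary. A satisfying feature is that this limit is independent of sign, so totally real and complex cubic fields exhibit exactly the \emph{same} density of genus-one members.

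For the numerical estimate $96.23009\%$, I would first invoke the identity $1+\frac{1}{p(p+1)} = \frac{1-p^{-3}}{1-p^{-2}}$, which clarifies the link to Dirichlet $L$-functions. Writing $f(s) = \prod_{p\equiv 2\pmod 3}(1-p^{-s})$ and letting $\chi$ denote the nontrivial character modulo $3$, elementary Euler-factor manipulations (using $f(s)g(s) = 1/[\zeta(s)(1-3^{-s})]$ and $g(s)(f(s)f(2s)/f(s)) = L(s,\chi)^{-1}$, where $g(s)=\prod_{p\equiv 1\pmod 3}(1-p^{-s})$) yield the functional relation
\[
f(s)^2 = \frac{f(2s)\,L(s,\chi)}{\zeta(s)\,(1-3^{-s})},
\]
which can be iterated to evaluate $f(2)$ and $f(3)$, and hence the Euler product, to very high precision using standard values of $\zeta$ and $L(\cdot,\chi)$. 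This produces the stated figure. The only nontrivial work is this numerical evaluation; the closed-form identity itself is immediate from Theorem~\ref{T:1} and Davenport--Heilbronn, and there is no serious analytic obstacle.
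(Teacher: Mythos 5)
Your proposal is correct and is essentially the paper's own (implicit) argument: divide the asymptotic of Theorem~\ref{T:1} by the total count from Theorem~\ref{T:TTBST} (Davenport--Heilbronn with $\frac{1}{2n^\pm\zeta(3)}X$ main term), so the $n^\pm$ cancels and the density $\frac{29\zeta(3)}{27\zeta(2)}\prod_{p\equiv 2\pmod 3}\bigl(1+\frac{1}{p(p+1)}\bigr)$ is the same for both signatures. The numerical evaluation via $1+\frac{1}{p(p+1)}=\frac{1-p^{-3}}{1-p^{-2}}$ and the iterated relation $f(s)^2=\frac{f(2s)L(s,\chi)}{\zeta(s)(1-3^{-s})}$ is sound (your intermediate expression ``$g(s)(f(s)f(2s)/f(s))$'' should read $g(s)f(2s)/f(s)$, but the stated relation and conclusion are right).
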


In Section \ref{ave}, we prove the following result regarding the average genus number of a cubic field:

\begin{theorem}\label{T:2}
The average genus number of a cubic field
(in the positive or negative discriminant case)
is given by

\begin{align*}
  &
  \lim_{X\to\infty}\frac{\displaystyle \sum_{0<\pm\Delta\leq X}g_K}{\displaystyle \sum_{0<\pm\Delta\leq X}1}
  \\[2ex]
  &\qquad
  =
    \frac{119\zeta(3)}{108\zeta(2)}
  \prod_{p\equiv 1\pmod{3}} \left(1+\frac{3}{p(p+1)}\right)\prod_{p\equiv 2\pmod{3}} \left(1+\frac{1}{p(p+1)}\right)
  \\[1ex]
  &\qquad
  \approx
1.078541 
\end{align*}
The above sums are taken over all cubic fields $K$
where the discriminant $\Delta$ falls in the specified range.
\end{theorem}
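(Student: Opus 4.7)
The strategy mirrors the proof of Theorem~\ref{T:1}. By Theorem~\ref{T:frohlich}, the genus number of a cubic field $K$ factors as
\[
g_K = \prod_p g_K(p),
\]
where each $g_K(p)\in\{1,3\}$ is determined by the decomposition/ramification of $p$ in $K$. The plan is to compute the weighted sum
\[
S^\pm(X):=\sum_{0<\pm\Delta_K\leq X} g_K
\]
by the same local-density approach used for $N^\pm(X)$, and then divide by the Davenport--Heilbronn total $\sum_{0<\pm\Delta_K\leq X}1\sim X/(2n^\pm\zeta(3))$ to pass to the average.

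For each finite prime $p$ I would enumerate the cubic \'etale $\mathbb{Q}_p$-algebras, record their Davenport--Heilbronn local masses, and compute the $g_K(p)$-weighted local expectation. A direct case check shows that this local average equals $1$ when $p\equiv 2\pmod 3$ (no local type contributes $g_K(p)=3$) and equals $(p^2+p+3)/(p^2+p+1)$ when $p\equiv 1\pmod 3$ (an additional totally-ramified cyclic local type contributes $g_K(p)=3$). The prime $p=3$ requires an \emph{ad hoc} analysis of the finitely many cubic \'etale $\mathbb{Q}_3$-algebras. After multiplying the local expectations and reorganizing the resulting Euler product as in Theorem~\ref{T:1}---with $1/\zeta(2)$ absorbing the $(1-1/p^2)$ factors and $\zeta(3)^{-1}$ the $(1-1/p^3)$ factors at the Davenport--Heilbronn normalization step---one recovers exactly the form stated in the theorem, with the rational constant $119/108$ arising from the $p=3$ local contribution together with the archimedean factor $1/n^\pm$.

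The main obstacle is the tail estimate needed to justify interchanging the sum over $K$ with the product over primes, since $g_K$ is unbounded on the family. The idea is to truncate the product at primes $p\leq Y$, apply the Davenport--Heilbronn count with local conditions (in the power-saving form used for Theorem~\ref{T:1}) at the finitely many primes $p\leq Y$, and then bound the contribution of the tail $p>Y$ by $o(X)$ as $Y\to\infty$. The required uniformity reduces to a bound of the shape
\[
\#\{K:|\Delta_K|\leq X,\;p\mid\Delta_K\text{ and }g_K(p)=3\}\ll X/p^2,
\]
uniform in $p$, which follows from the Davenport--Heilbronn count of cubic fields with prescribed square-divisors of the discriminant, together with a geometric bound on fields having many such ramified primes. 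Passing to the limit then yields the stated Euler product and the numerical value $\approx 1.078541$.
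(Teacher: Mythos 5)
Your overall strategy is essentially the paper's: Fr\"ohlich's theorem reduces $g_K$ to local contributions supported on totally ramified primes, and the Taniguchi--Thorne/Bhargava--Shankar--Tsimerman counts with local conditions (in the uniform, power-saving form already used for Theorem~\ref{T:1}, i.e.\ Propositions~\ref{incl-ex} and~\ref{incl-ex2}) convert the weighted sum into an Euler product of local expectations. Your local values --- $1$ at $p\equiv 2\pmod 3$, $(p^2+p+3)/(p^2+p+1)=1+2/(p^2+p+1)$ at $p\equiv 1\pmod 3$, and a special factor at $p=3$ (the density $1/117$ of ``$3$ totally ramified and $d\equiv 1\pmod 3$'', giving expectation $119/117$) --- are exactly what the paper obtains by summing $3^{\psi(f)}N(f;X)+2\cdot 3^{\psi(f)}N'(f;X)$ over squarefree $f$, and your truncation-plus-tail argument (using $g_K=3^{\psi(f)}=O(|\Delta|^{\eps})$ together with a uniform bound $\ll X/p^2$ for fields totally ramified at $p$) is the same mechanism the paper implements through the explicit $f$-dependence $O(f^{-1}X^{16/17+\eps})$ in its error terms.

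The one genuine gap is the cyclic fields. Your opening claim that $g_K=\prod_p g_K(p)$ is false when $K$ is cyclic: there $g_K=3^{e-1}$, off by a factor of $3$ from the local product. More importantly, you cannot dismiss cyclic fields merely because they have density zero, since $g_K$ is unbounded on them; one must bound $\sum g_K$ over cyclic $K$ with $|\Delta|\le X$ directly. The paper does this by noting that $\Delta$ is a perfect square for cyclic fields, so this sum is at most $\sum_{k\ge 1}\pi_k(\sqrt{X})\,3^{k-1}=O(X^{1/2}(\log X)^2)=o(X)$, using Landau's asymptotic for $\pi_k$. Without such an estimate your argument only averages over non-cyclic fields, and the identity you start from is wrong on the omitted set. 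Two smaller points: the constant $119/108$ does not involve the archimedean factor $1/n^{\pm}$ --- that factor cancels in the ratio of the weighted count to the Davenport--Heilbronn count; the $119/108$ comes from the $p=3$ expectation $1+2/117=119/117$ combined with the Euler factor $12/13$ of $\zeta(3)/\zeta(2)$ at $3$. And the ``ad hoc'' $3$-adic analysis you defer is precisely the density computation $\mu_3(S)=16/2187$, i.e.\ the factor $1/117$, of Proposition~\ref{incl-ex2}; since the final rational constant hinges on it, it must be cited or carried out rather than left as a placeholder.
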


In Section~\ref{dist}, we give the exact proportion of cubic fields with a given genus number. 
\begin{theorem} \label{T:3}
A positive proportion of cubic fields (of positive or negative discriminant)
have $g_K=m$ iff $m$ is a power of $3$, and
the exact proportion with $g_K=3^k$ is given by
\[
  \frac{\zeta(3)}{\zeta(2)}
  \left[
  \frac{29}{27}\sum_{f\in T_k} \prod_{p|f}\frac{1}{p(p+1)}
  +
    \frac{1}{108}\sum_{f\in T_{k-1}} \prod_{p|f}\frac{1}{p(p+1)}
    \right]
  \,,
\]
where $T_k$ denotes the collection of squarefree integers coprime to $3$ having exactly $k$ prime factors $p$ that  satisfy
$p\equiv 1\pmod{3}$.\footnote{If we adopt the convention that $T_{-1}=\emptyset$, then the formula holds for $k=0$ as well.}
The approximate proportions are given in the following table:
\begin{center}
\begin{tabular}{|c|c|c|c|c|c|}
\hline
$k$ & $0$ & $1$ & $2$ & $3$  \\
\hline
proportion & $96.23\%$ & $3.72\%$ & $0.05\%$ & really small! \\
\hline
\end{tabular}
\end{center}
\end{theorem}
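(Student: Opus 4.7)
The plan is to adapt the sieve from the proof of Theorem~\ref{T:1} in Section~\ref{genus1} so that it counts cubic fields with $g_K=3^k$ rather than just $g_K=1$. By Theorem~\ref{T:frohlich}, $g_K$ is always a power of $3$ (which immediately yields the ``iff $m$ is a power of $3$'' half of the statement), and its exponent is governed by local data at those primes $p$ for which $\Q_p$ admits a ramified cyclic cubic extension. These primes are precisely $p\equiv 1\pmod 3$ and $p=3$: each $p\equiv 1\pmod 3$ that is totally ramified in $K$ contributes one factor of $3$ to $g_K$, while $3$ contributes in a binary fashion determined by the cubic \'etale algebra $K\otimes_\Q\Q_3$. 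Consequently, $g_K=3^k$ iff $K$ lies in one of two disjoint families: (a) $3$ is non-contributing and exactly $k$ primes $\equiv 1\pmod 3$ are totally ramified in $K$, or (b) $3$ is contributing and exactly $k-1$ primes $\equiv 1\pmod 3$ are.

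To count, I would index by the squarefree integer $f$ coprime to $3$ that records the ramified primes of $K$ away from $3$; then a cubic field contributes to case (a) iff $f\in T_k$ and to case (b) iff $f\in T_{k-1}$. For each fixed $f$, I would rerun the sieve of Theorem~\ref{T:1}, but with the local condition at each $p\mid f$ switched from ``unramified'' to ``ramified,'' replacing that prime's local Euler factor by $\tfrac{1}{p(p+1)}$. Summing over $f\in T_k$ then automatically reassembles the factor $\prod_{p\equiv 2\pmod 3}(1+\tfrac{1}{p(p+1)})$ that already appears in Theorem~\ref{T:1} (since the primes $\equiv 2\pmod 3$ are free to lie in $f$ or not, being genus-irrelevant in either case), together with an elementary symmetric expression in $\{1/(p(p+1))\}_{p\equiv 1\pmod 3}$ coming from the $k$-subset of primes $\equiv 1\pmod 3$ dividing $f$.

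The prime $3$ distinguishes the two cases. In case (a), the local density at $3$ matches that of Theorem~\ref{T:1} and produces the coefficient $\tfrac{29}{27}$ once the Davenport--Heilbronn normalization $\tfrac{1}{2n^\pm\zeta(3)}$ is divided out; in case (b) one uses the complementary local density at $3$, which I expect to evaluate to $\tfrac{1}{108}$. Adding the two contributions yields the stated proportion, and the entries in the table follow by numerical evaluation of the Euler products over primes $\equiv 1\pmod 3$.

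The main obstacle is the explicit local computation at $3$ in case (b): one must isolate exactly which cubic \'etale $\Q_3$-algebras render $3$ genus-contributing and show that their combined mass produces the constant $\tfrac{1}{108}$. A secondary and more technical point is justifying the interchange of the sum over $f$ with the asymptotic limit $X\to\infty$; since $\sum_p 1/(p(p+1))$ converges, this should follow from the uniformity of the error term in Theorem~\ref{T:1}, but care is required to ensure the implicit constants remain uniform in the local conditions as $f$ varies.
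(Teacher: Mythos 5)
Your proposal is correct and follows essentially the same route as the paper: decompose the fields with $g_K=3^k$ into those with $f\in T_k$ where $3$ does not contribute and those with $f\in T_{k-1}$ where $3$ does (the paper writes this as $(\mathcal{F}^\pm_1\setminus\mathcal{F}^\pm_2)\cup\mathcal{F}^\pm_3$), then apply the sieve of Section~\ref{genus1} uniformly in $f$ and sum. The one step you flag as an obstacle---the local density at $3$ giving the constant $\tfrac{1}{108}$---is exactly the computation already carried out in Proposition~\ref{incl-ex2}, so nothing new is needed there.
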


Our initial interest in this question stemmed from the study of norm-Euclidean cubic fields.
There are only finitely many norm-Euclidean cubic fields with negative discriminant,
but it may very well be the case that there are infinitely many with positive discriminant
(see~\cite{davenport.euclidean, heilbronn.cubic}).
A norm-Euclidean field is necessarily class number one and, hence, genus number one;
thus, if $g_K \neq 1$, we can trivially conclude that $K$ is not norm-Euclidean.
Consequently, it is of greater interest to study fields that fail to be norm-Euclidean for reasons other than genus theory.
In Section \ref{NE}, we prove the following result:

\begin{theorem}\label{T:euclidean.genus}
A positive proportion of totally real cubic fields with genus number one fail to be norm-Euclidean.
\end{theorem}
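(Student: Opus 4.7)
A norm-Euclidean number field necessarily has class number one, so it suffices to exhibit a positive proportion of $K\in\mathcal{F}^+$ with $h_K>1$. Equivalently, I need to bound the density, inside $\mathcal{F}^+$, of those $K$ with $h_K=1$ strictly away from $1$.

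The key input is Bhargava's theorem on the mean size of the $2$-torsion of the class group of a cubic field: for totally real cubic fields ordered by discriminant, the average of $|\Cl(\OK)[2]|$ equals $5/4$. In particular, a positive proportion of totally real cubic fields have non-trivial $2$-torsion, and so $h_K>1$. Because the genus number is always a power of $3$ (Theorem~\ref{T:frohlich}), the conditions $g_K=1$ and $|\Cl(\OK)[2]|>1$ are morally independent: the first lives on the $3$-primary side and the second on the $2$-primary side. More precisely, Fr\"ohlich's theorem expresses the condition $g_K=1$ as a local restriction on the ramification of $K$, which is exactly the kind of constraint handled by the local-conditions version of Bhargava's orbit-counting machinery (as developed by Bhargava--Varma and others). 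Applying this refined result to $\mathcal{F}^+$, the average of $|\Cl(\OK)[2]|$ over $\mathcal{F}^+$ is an explicit Euler product whose local factors differ from Bhargava's only at primes affected by the $\mathcal{F}^+$ constraint, and one verifies the restricted average still exceeds $1$. This gives a positive proportion of $K\in\mathcal{F}^+$ with even class number, completing the argument.

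I expect the main technical obstacle to be verifying that the restricted Euler product is strictly greater than $1$, and not merely equal to $1$. Since the local factors affected by the $\mathcal{F}^+$ restriction involve only primes in a prescribed congruence class, this reduces to a finite computation using the same local densities that appear in the proof of Theorem~\ref{T:1}. A cleaner alternative, avoiding Bhargava entirely, would be to construct directly a positive-density subfamily of $\mathcal{F}^+$ for which $h_K>1$ can be read off from an explicit local obstruction, for example by exhibiting a small split prime whose prime divisors fail to be principal due to a parity or congruence constraint on any would-be generator.
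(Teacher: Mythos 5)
The central step of your plan --- ``the average of $|\Cl(\OK)[2]|$ over totally real cubic fields is $5/4$, hence a positive proportion have nontrivial $2$-torsion'' --- is not a valid inference, and this is a genuine gap. A first moment exceeding $1$ gives an \emph{upper} bound on the proportion of fields with $|\Cl(\OK)[2]|\geq 2$ (at most $1/4$ here), not a lower bound: since $2$-torsion is unbounded, the excess $\sum_K\bigl(|\Cl(\OK)[2]|-1\bigr)$, which is asymptotic to one quarter of the number of fields, could in principle be carried by a density-zero subfamily with large $2$-torsion, and the best known pointwise bounds (of the shape $|\Cl(\OK)[2]|\ll|\Delta|^{0.28}$) are far too weak to rule that out; no second-moment or tail estimate is available to plug the hole. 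So even the unrestricted statement ``a positive proportion of totally real cubic fields have even class number'' does not follow from Bhargava's average, let alone its restriction to $\mathcal{F}^+$ --- and the restriction matters, since the fields with $g_K>1$ (about $3.77\%$, all of which do have $h_K>1$ because $g_K\mid h_K$) are exactly the ones you must exclude. Two further problems: the condition $g_K=1$ is an \emph{infinite} set of local conditions (every totally ramified prime must be $\equiv 2\pmod{3}$), so invoking a local-conditions version of the $2$-torsion average requires explicit uniformity in the local data, precisely the kind of uniformity the paper has to work for in Propositions~\ref{incl-ex} and~\ref{P:HFIX}; and your fallback of ``an explicit local obstruction forcing $h_K>1$'' cannot work inside $\mathcal{F}^+$, because for cubic fields the only class-number divisibility forced by splitting/ramification data is genus theory, which is $3$-primary and has been removed by the hypothesis $g_K=1$.

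The paper's proof avoids class numbers altogether. It uses Heilbronn's criterion (Lemma~\ref{L:Heilbronn}): for a genus number one field, $K$ is not norm-Euclidean as soon as $F$ (the product of the totally ramified primes) can be written as $a+b$ with $a,b$ non-norms, the cubic-residue condition being automatic when $g_K=1$; this is detected by a ``Heilbronn pair'' of inert primes below $F$. For each $F$ the authors count genus-one fields whose inert primes below $F$ contain such a pair, using Theorem~1.3 of Taniguchi--Thorne with explicit dependence on the many splitting conditions imposed (whence the $e^{4F/17}$ factors), and then sum over $F$ to obtain $H(X)\sim BX$ with $5.7\cdot 10^{-4}\leq B\leq 6.1\cdot 10^{-4}$ (Theorem~\ref{P:HX}); this yields the theorem, and does so for fields that may well have class number one, which is what makes the result nontrivial beyond class-number or genus obstructions.
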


Our starting point is a theorem of Fr\"ohlich which gives an explicit
description of $g_K$ when $K$ is cubic.
The main tool we employ is a powerful theorem,
proved independently by Taniguchi--Thorne
and Bhargava--Shankar--Tsimerman,
that allows one to compute the density of  cubic discriminants 
satisfying specified local conditions with a very precise error term.
This is a generalization of a classical theorem
of Davenport--Heilbronn, who were the first to accomplish the counting of cubic fields.


\section{Preliminaries}
Let $K$ be a cubic field. Then the discriminant $\Delta$ takes one of the three forms $df^2$, $9df^2$, $81df^2$, where $d$ is a fundamental discriminant and $f$ is a squarefree positive integer coprime to $3$.
A prime $p \neq 3$ is totally ramified in $K$ if and only if $p$ divides $f$,
and $3$ is totally ramified in $K$ if and only if $\Delta$ takes one of the forms $9df^2$, $81df^2$~\cite{cohen}.  The following theorem of Fr\"ohlich gives an explicit expression for the genus number~\cite{frohlich};
see also~\cite{ishida}.

\begin{theorem}[Fr\"ohlich]\label{T:frohlich}
Let $e$ denote the number of odd primes $p$ such that $p$ is totally ramified in $K$ and $(d/p)=1$,
where $(d/p)$ is the usual Legendre symbol.
Then we have:
\[
   g_K=\begin{cases}
   3^{e-1} & \text{if $K$ is cyclic}\\
   3^e & \text{if $K$ is not cyclic.}
   \end{cases}
\]
\end{theorem}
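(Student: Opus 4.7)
My plan is to prove Fr\"ohlich's formula using class field theory. Write $K^* = KM$, where $M$ is the maximal abelian extension of $\mathbb{Q}$ such that $KM/K$ is unramified at every finite prime; this is well-defined by the definition of the genus field. Then $g_K = [KM:K] = [M : M \cap K]$, so the task reduces to computing $[M:\mathbb{Q}]$ and deciding whether $K \subseteq M$.

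The first step is to translate admissibility of a Dirichlet character $\chi$ into local conditions. By local class field theory applied to each completion $K_\mathfrak{p}/\mathbb{Q}_p$, the abelian extension $K L_\chi / K$ is unramified at $\mathfrak{p}$ iff $\chi$, viewed as a character of $\mathbb{Z}_p^*$, is trivial on $N_{K_\mathfrak{p}/\mathbb{Q}_p}(\mathcal{O}_{K_\mathfrak{p}}^*)$. Imposing this at every prime $\mathfrak{p}$ above $p$ produces the group of locally admissible characters at $p$, and the character group of $M$ is the direct sum of these local groups as $p$ varies.

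The second step is the local enumeration. If $p$ is unramified in $K$, or if $p \neq 3$ and $p \mid d$ (so $K$ is only partially ramified at $p$), some prime above $p$ is unramified in $K$, the local norm on units is surjective, and admissibility forces $\chi$ to be unramified at $p$. If $p \neq 3$ and $p \mid f$, so $p$ is totally tamely ramified in $K$, then $p \equiv 1 \pmod 3$. Inspecting the decomposition of $p$ in the Galois closure of $K$ shows that $(d/p) = 1$ corresponds to decomposition and inertia both equal to $A_3$, making $K_\mathfrak{p}/\mathbb{Q}_p$ cyclic totally ramified cubic, with $N_{K_\mathfrak{p}/\mathbb{Q}_p}(\mathcal{O}_{K_\mathfrak{p}}^*)$ the unique index-$3$ subgroup of $\mathbb{Z}_p^*$, and hence exactly $3$ locally admissible characters of conductor dividing $p$ (trivial plus two cubic); whereas $(d/p) = -1$ corresponds to decomposition $S_3$ with inertia $A_3$, making $K_\mathfrak{p}/\mathbb{Q}_p$ a non-Galois cubic whose only abelian subextension of $\mathbb{Q}_p$ is $\mathbb{Q}_p$, so $N_{K_\mathfrak{p}/\mathbb{Q}_p}(K_\mathfrak{p}^*) = \mathbb{Q}_p^*$ and only the trivial $\chi$ is admissible. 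A parallel but more intricate wild-ramification analysis at $p=3$, working through the three discriminant shapes $df^2$, $9df^2$, $81df^2$ and exploiting the standard unit filtration on $\mathbb{Z}_3^*$, produces an additional factor of $3$ precisely when $3$ is totally ramified in $K$ and $(d/3)=1$.

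Combining the local factors gives $[M:\mathbb{Q}] = 3^e$ with $e$ as in the statement. To conclude: if $K$ is non-cyclic, then $K$ admits no proper nontrivial abelian subfield over $\mathbb{Q}$, so $M \cap K = \mathbb{Q}$ and $g_K = [M:\mathbb{Q}] = 3^e$; if $K$ is cyclic, then the cubic characters cutting out $K$ are themselves admissible (local CFT shows directly that they kill $N_{K_\mathfrak{p}/\mathbb{Q}_p}(\mathcal{O}_{K_\mathfrak{p}}^*)$ at each ramified $p$), so $K \subseteq M$ and $g_K = [M:K] = 3^{e-1}$. The main technical obstacle is the wild analysis at $p = 3$, where a uniform tame argument is unavailable and each ramification type must be handled separately to extract the correct local factor.
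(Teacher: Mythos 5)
The paper gives no proof of this statement: it is quoted directly from Fr\"ohlich, with Ishida's monograph cited for an exposition, so there is nothing internal to compare against line by line. Your class-field-theoretic framework is in fact the standard route (and essentially the cited one): writing $K^*=KM$ with $M$ the maximal absolutely abelian field making $KM/K$ unramified at finite primes, translating admissibility of a Dirichlet character $\chi$ into triviality of $\chi_p$ on $N_{K_\mathfrak{p}/\mathbb{Q}_p}(\mathcal{O}_{K_\mathfrak{p}}^\times)$ for every $\mathfrak{p}\mid p$, multiplying local factors, and then distinguishing the cyclic and non-cyclic cases through $M\cap K$. That skeleton, the reduction to totally ramified primes (an unramified or degree-one completion forces $\chi_p$ trivial), and the endgame are all sound.

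Two problems remain, one minor and one substantive. First, your claim that $p\mid f$, $p\neq 3$ forces $p\equiv 1\pmod{3}$ is false: for $K=\mathbb{Q}(2^{1/3})$ one has $f=2$, and for $\mathbb{Q}(5^{1/3})$ one has $f=5$. What is true, and what your subsequent dichotomy actually uses, is that for a totally ramified $p\neq 3$ the completion $K_\mathfrak{p}/\mathbb{Q}_p$ is cyclic iff $p\equiv 1\pmod 3$ iff $(d/p)=1$, and non-Galois iff $(d/p)=-1$; this also shows $p=2$ can never contribute a factor, which is why the count over odd primes in the statement is consistent with your local product. Second, and this is the genuine gap: the entire local computation at $p=3$ --- precisely the delicate part of Fr\"ohlich's theorem --- is asserted rather than carried out; saying that a ``more intricate wild-ramification analysis'' produces a factor of $3$ exactly when $3$ is totally ramified and $d\equiv 1\pmod 3$ is to assume the hardest piece of the statement. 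The gap is fillable, and in fact no case-by-case wild analysis through the shapes $df^2$, $9df^2$, $81df^2$ is needed: by the norm limitation theorem the argument at $3$ parallels your tame case. If $3$ is totally ramified, then $K_\mathfrak{p}/\mathbb{Q}_3$ is cyclic exactly when decomposition and inertia in the Galois closure both equal $A_3$, i.e.\ when $(d/3)=1$, and then $N(\mathcal{O}_{K_\mathfrak{p}}^\times)$ has index $3$ in $\mathbb{Z}_3^\times$ (giving the factor $3$), while for $(d/3)\in\{-1,0\}$ the local cubic is non-Galois, so $N(K_\mathfrak{p}^\times)=\mathbb{Q}_3^\times$ and the factor is $1$; if $3$ is only partially ramified, the degree-one completion forces $\chi_3$ trivial as in your other cases. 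With that argument written out (and the erroneous congruence claim removed), your proof is complete and matches the standard one.
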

Note that since the number of cyclic cubic fields with discriminant less than or equal to $X$ is $O(X^{1/2})$,
for our purposes we may neglect these fields~\cite{hasse3, cohn}.

Our main tool is the following theorem, which
is a strengthening of the classical Davenport--Heilbronn Theorem
(see~\cite{davenport.heilbronn, taniguchi.thorne, bhargava.shankar.tsimerman}).
For what follows, set $m^+=1$ and $m^-=\sqrt{3}$.
\begin{theorem}[Taniguchi--Thorne, Bhargava--Shankar--Tsimerman]\label{T:TTBST}
The number of cubic fields satisfying $0<\pm\Delta\leq X$ equals
\[
  \frac{1}{2n^\pm\zeta(3)}X+\frac{4m^\pm\zeta(1/3)}{5\Gamma(2/3)^3\zeta(5/3)}X^{5/6}+O(X^{7/9+\eps})
  \,.
\]
\end{theorem}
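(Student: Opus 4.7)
The plan is to approach this via the Shintani zeta function for the prehomogeneous vector space of binary cubic forms, building on ideas of Shintani, Datskovsky--Wright, Ohno--Nakagawa, and the authors cited in the theorem statement.

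First, I would invoke the Davenport--Heilbronn correspondence, which identifies $GL_2(\Z)$-orbits of integral binary cubic forms with isomorphism classes of cubic rings; cubic fields correspond to the maximal irreducible orbits, to be extracted via a M\"obius-type sieve over primes together with the exclusion of reducible forms. The natural analytic object is then the Shintani zeta function
\[
\xi^\pm(s) \;=\; \sum_{[f]} \frac{|\mathrm{Disc}(f)|^{-s}}{|\mathrm{Stab}(f)|},
\]
where the sum runs over $GL_2(\Z)$-orbits of integral binary cubic forms with discriminant of the appropriate sign.

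Second, I would use the analytic structure of $\xi^\pm(s)$: Shintani's theorem gives meromorphic continuation to $\mathbb{C}$ together with a functional equation relating $s$ to $1-s$, and the Ohno--Nakagawa identity links the $\pm$ components of the functional equation in the right way. This machinery produces principal poles at $s=1$ and $s=5/6$, and the residues can be read off explicitly from the gamma factors in the functional equation, yielding the coefficients $\tfrac{1}{2n^\pm\zeta(3)}$ and $\tfrac{4m^\pm\zeta(1/3)}{5\Gamma(2/3)^3\zeta(5/3)}$ stated in the theorem.

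Third, I would apply a Perron-type integral representation to relate $N^\pm(X)$ to $\xi^\pm(s)$, and shift the contour of integration to $\mathrm{Re}(s) = 7/9 + \eps$, picking up the two residues as the main and secondary terms. The remaining vertical-line integral is bounded using a convexity (or ideally subconvexity) estimate for $|\xi^\pm(\sigma + it)|$, while the M\"obius sieve isolating maximal orbits contributes a further error that must be controlled in parallel to reach the final bound $O(X^{7/9+\eps})$.

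The main technical obstacle is twofold: rigorously establishing the pole at $s = 5/6$ and identifying its exact residue (the principal analytic innovation beyond classical Davenport--Heilbronn, requiring careful bookkeeping through the functional equation and the maximality sieve), and obtaining a sufficiently strong vertical-line bound on $\xi^\pm(s)$ to push the error term down to $X^{7/9+\eps}$. The latter bound, essentially a subconvexity-type estimate for the Shintani zeta function uniformly across the relevant critical strip, is the heart of the Taniguchi--Thorne improvement underlying the theorem.
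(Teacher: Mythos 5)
First, note that the paper does not prove this statement at all: Theorem~\ref{T:TTBST} is imported wholesale from the cited works of Taniguchi--Thorne and Bhargava--Shankar--Tsimerman, so your proposal has to be measured against those proofs rather than against anything in this paper. Your outline does follow the general shape of the Taniguchi--Thorne (Shintani zeta function) route, and the objects you name are the right ones; Bhargava--Shankar--Tsimerman's proof is entirely different (geometry of numbers in the space of binary cubic forms, with the secondary term arising from a careful treatment of the cusp), but you only need one proof, so that is not an issue.

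The genuine gap is that you have located the difficulty in the wrong place, and the step you wave at ``in parallel'' is where the entire theorem lives. The meromorphic continuation, the functional equation, the poles at $s=1$ and $s=5/6$, and their residues are classical: they are due to Shintani (1972), and Datskovsky--Wright already exploited them; the Ohno--Nakagawa identity is likewise not a needed input. What kept the $X^{5/6}$ term conjectural (Roberts' conjecture) for decades is precisely the passage from counting $GL_2(\Z)$-orbits of all integral binary cubic forms (cubic rings) to counting maximal orbits (cubic fields): the maximality sieve is an infinite M\"obius-type sum over moduli $q$, and to keep the secondary term one must count forms satisfying congruence/maximality conditions modulo $q$ with an error term that is \emph{polynomial in $q$ and uniform} in $q$, then balance this against a tail estimate of the shape $O(X/q^2)$ for non-maximal forms. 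Taniguchi--Thorne achieve this uniformity via the functional equation for lattices with congruence conditions and explicit computation and estimation of the Fourier transforms of the maximality conditions; the exponent $7/9$ comes from optimizing this balance, not from shifting a Perron contour to $\mathrm{Re}(s)=7/9+\eps$, and no subconvexity estimate for $\xi^\pm(s)$ is the ``heart'' of the matter. As written, your plan --- contour shift for the single zeta function plus an unquantified sieve --- would leave a sieve error that swamps $X^{5/6}$, which is exactly the failure mode that stalled this problem after Davenport--Heilbronn.
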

We note in passing that the secondary term was conjectured
by Roberts~\cite{roberts} following computations carried out by Belabas~\cite{belabas1},
and that the existence of a power saving error term was first proved
by Belabas--Bhargava--Pomerance in~\cite{belabas.bhargava.pomerance}; this whole story is summarized
very nicely in~\cite{belabas.bhargava.pomerance}.

In fact, both papers~\cite{taniguchi.thorne, bhargava.shankar.tsimerman}
give a stronger version of Theorem~\ref{T:TTBST} 
(which we will require) allowing one to specify local conditions.
If local conditions are imposed at finitely many primes $p$,
then the main term and the secondary term are multiplied by an additional
factor for each $p$; moreover, in this case,
the implicit constant in the $O$-term now depends upon the set of local conditions.
For our application we need to specify
infinitely many local conditions \emph{and} we require an explicit dependence on these local conditions.
In principle, either Theorem~7 of~\cite{bhargava.shankar.tsimerman} or Theorem~1.3 of~\cite{taniguchi.thorne} will suffice,
but the situation is such that neither accomplishes our aim ``out-of-the-box'' with no additional work.
We have chosen to use Theorem~1.3 of~\cite{taniguchi.thorne} as our main work horse.
At the appropriate juncture in our proofs, please see \S6 of~\cite{taniguchi.thorne} 
and \S4 of~\cite{bhargava.shankar.tsimerman}
for information regarding local density calculations.

Finally, we mention that there is a forthcoming paper~\cite{bhargava.taniguchi.thorne}
that improves the error term in Theorem~\ref{T:TTBST} to $O(X^{2/3+\eps})$.
Substituting this result into our arguments would result in an improvement of our error terms.


\section{Counting genus number one cubic fields}\label{genus1}

Recall that we write the discriminant of a cubic field $K$ as $\Delta = df^2$, $9df^2$, or $81df^2$ with $f$ coprime to $3$. Let $\mathcal{G}^\pm$ denote the collection of all cubic fields with $\sgn(\Delta)=\pm 1$, and let $\mathcal{F}^\pm \subseteq \mathcal{G}^\pm$ denote the collection of all such cubic fields $K$ with $g_K=1$.  Let $N^\pm(X)=\#\{K\in\mathcal{F}^\pm: |\Delta|\leq X\}$. Define
\begin{align*}
\mathcal{F}^\pm_1&=\{K\in\mathcal{G}^\pm\mid p\equiv 2\pmod{3} \text{ for all $p$ dividing $f$}\}\\
\mathcal{F}^\pm_2&=\{K\in\mathcal{F}^\pm_1\mid \text{ $3$ is totally ramified and } d\equiv 1\pmod{3}\}.
\end{align*}
By Theorem~\ref{T:frohlich}, we have $\mathcal{F}^\pm=\mathcal{F}^\pm_1\setminus\mathcal{F}^\pm_2$,
and therefore $N^\pm(X)=N^\pm_1(X)-N^\pm_2(X)$ where
$N^\pm_i(X)=\#\{K\in\mathcal{F}^\pm_i : |\Delta|\leq X\}$.
Indeed, when $p\neq 2,3$ is totally ramified, the condition $(d/p)=1$ is equivalent to $p\equiv 1\pmod{3}$
(see~\cite{ishida}, Example 6.10),
and, of course, $(d/3)=1$ is equivalent to $d\equiv 1\pmod{3}$.
In what follows, we will establish asymptotic formulas for $N^\pm_1(X)$, $N^\pm_2(X)$ individually
and then obtain the desired result by subtraction.

For each square free $f$ coprime to $3$ we write $N^\pm(f;X)$
to denote the number of fields in $\mathcal{G}^\pm$ with $|\Delta|\leq X$ that are totally ramified at the primes dividing $f$ and at no other primes, except possibly $3$. 

\begin{proposition}\label{incl-ex}
\[
N^\pm(f;X)
=
   \frac{13}{24n^\pm\zeta(2)}\prod_{p|f}\frac{1}{p(p+1)} X
   +
O(f^{-1}X^{16/17+\eps})
\,.
\]
\end{proposition}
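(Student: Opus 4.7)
The plan is to combine M\"obius inversion with Theorem 1.3 of \cite{taniguchi.thorne}, which counts cubic fields subject to finitely many prescribed local conditions. The condition defining $N^\pm(f;X)$ involves infinitely many primes (total ramification at each $p\mid f$ and non-total-ramification at each $p\nmid 3f$), so we first reduce to finitely many conditions at a time.

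For each squarefree integer $h$ coprime to $3$, let $M^\pm(h;X)$ denote the number of cubic fields $K\in\mathcal{G}^\pm$ with $|\Delta|\leq X$ such that every prime $p\mid h$ is totally ramified in $K$ (with no restriction elsewhere). Since a cubic field $K\in\mathcal{G}^\pm$ contributes to $N^\pm(f;X)$ iff the product of its totally ramified primes other than $3$ is exactly $f$, M\"obius inversion over squarefree $g$ coprime to $3f$ yields
\[
  N^\pm(f;X) \;=\; \sum_{\substack{g\geq 1,\ \mu^2(g)=1\\ (g,3f)=1}} \mu(g)\,M^\pm(fg;X).
\]
Applying Theorem 1.3 of \cite{taniguchi.thorne} to each $M^\pm(fg;X)$, the main term is $\frac{X}{2n^\pm\zeta(3)}\prod_{p\mid fg}\lambda_p$, where $\lambda_p$ denotes the local density at $p$ of totally ramified cubic \'etale $\mathbb{Q}_p$-algebras. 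From the computations of \S 6 of \cite{taniguchi.thorne} one extracts $\lambda_p = 1/(p^2+p+1)$ for $p\neq 3$. Carrying out the sum over $g$ converts the M\"obius sum into $\frac{X}{2n^\pm\zeta(3)}\prod_{p\mid f}\lambda_p\prod_{p\nmid 3f}(1-\lambda_p)$, and the Euler-factor identity $(1-p^{-3})(1-\lambda_p)=1-p^{-2}$ for $p\neq 3$, combined with $\zeta(3)^{-1}=\prod_p(1-p^{-3})$, simplifies this to $\frac{13}{24n^\pm\zeta(2)}\prod_{p\mid f}\frac{1}{p(p+1)}\cdot X$, the asserted main term.

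The technical heart of the proof is controlling the error. We truncate the M\"obius sum at a cutoff $g\leq Y$ to be chosen. For $g>Y$, the trivial bound $M^\pm(fg;X)\ll X/(fg)^{2-\eps}$, which holds because $(fg)^2\mid\Delta_K$ whenever every prime dividing $fg$ is totally ramified in $K$, gives a tail contribution of $\tilde O(X/(f^2Y))$. For $g\leq Y$, we sum the individual error terms $E^\pm(fg,X)$ from Theorem 1.3, having first arranged uniform control in the local conditions. Given an estimate of the form $E^\pm(fg,X)\ll (fg)^\kappa X^{7/9+\eps}$ for a suitable $\kappa$, an appropriate choice of $Y$ balancing the head and tail delivers the stated $O(f^{-1}X^{16/17+\eps})$.

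The principal obstacle is precisely this uniformity step: Theorem 1.3 of \cite{taniguchi.thorne} as written does not provide an error term with explicit, usable dependence on the set of local conditions, and extracting that dependence is the \emph{additional work} alluded to in \S 2 of the present paper. Once it is in place, the combinatorial parts---the M\"obius inversion, the Euler-product simplification, and the balancing of $Y$---are essentially routine.
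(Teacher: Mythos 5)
Your strategy is exactly the paper's: M\"obius inversion over an auxiliary totally ramified modulus, Theorem~1.3 of Taniguchi--Thorne on the truncated range, a uniform tail estimate for the large moduli, and the Euler-product computation for the constant. Your main-term calculation is correct: with $\lambda_p=1/(p^2+p+1)$ the identity $(1-p^{-3})(1-\lambda_p)=1-p^{-2}$ does reproduce $\frac{13}{24n^\pm\zeta(2)}\prod_{p|f}\frac{1}{p(p+1)}$, matching the paper. But two steps, as written, are genuine gaps rather than routine omissions. First, your justification of the tail bound $M^\pm(fg;X)\ll (fg)^{-2+\eps}X$ --- ``because $(fg)^2\mid\Delta_K$'' --- does not work: counting integers up to $X$ divisible by $(fg)^2$ says nothing about how many cubic fields share each such discriminant, and the number of cubic fields with a fixed discriminant $D$ is not known to be $O(|D|^{\eps})$ (it is governed by $3$-torsion of quadratic class groups, where the best bounds lose a positive power of $|D|$). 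The estimate you need is the uniform Davenport--Heilbronn-type bound of Lemma~3.4 of Taniguchi--Thorne, which counts the relevant binary cubic forms directly and gives $M^\pm(q;X)=O(6^{\omega(q)}q^{-2}X)=O(q^{-2+\eps}X)$; this is precisely what the paper invokes.

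Second, the proposition asserts the specific error $O(f^{-1}X^{16/17+\eps})$, and that cannot follow from an unspecified bound $E^\pm(fg,X)\ll (fg)^{\kappa}X^{7/9+\eps}$ ``for a suitable $\kappa$'': the exponent $16/17$ and the factor $f^{-1}$ are consequences of the particular value $\kappa=16/9$ extracted from Theorem~1.3, together with the explicit optimization. Concretely, the paper writes $M^\pm(f;X)=c^\pm_f X+d^\pm_f X^{5/6}+O(f^{16/9}X^{7/9+\eps})$ with $d^\pm_f=O(f^{-2})$ --- note the secondary term is of size $X^{5/6}>X^{7/9}$, so it cannot be absorbed into your $E^\pm$ and must be tracked separately, summing over the auxiliary modulus to $O(f^{-2}X^{5/6})$ --- and then balances the head error $O(Y^{25/9}f^{16/9}X^{7/9+\eps})$ against the tail $O(Y^{-1+\eps}f^{-2+\eps}X)$ by choosing $Y=f^{-1}X^{1/17}$, which is exactly where $O(f^{-1}X^{16/17+\eps})$ comes from. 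Until you pin down the dependence on the local conditions (the uniformity issue you correctly flag) and carry out this balancing, the stated error term is not established.
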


\begin{proof}
Write $M^\pm(f;X)$ to denote the number of cubic fields of positive (or negative) discriminant
with $|\Delta|\leq X$ where all the primes dividing $f$ are totally ramified (and no other restrictions).
Observe that this constitutes only finitely many local conditions.
Notice that
\[
  N^\pm(f;X)=\sum_{(r,3f)=1}\mu(r)M^\pm(rf;X)
  \,.
\]
We will split this sum
as $\sum_r=\sum_{r\leq Y}+\sum_{r>Y}$ where $Y$
is some parameter to be specified.
In all sums over $r$ we will only consider values where $(r,3f)=1$.
For $r\leq Y$, we will use Theorem~1.3 of~\cite{taniguchi.thorne} to obtain
\[
  M^\pm(f;X)= 
  c_f^\pm X + 
  d_f^\pm X^{5/6} + O(f^{16/9}X^{7/9+\epsilon})
  \,,
\]
where the constant in the main term is

\begin{align*}
  c_f^\pm
  &=
  \frac{1}{2n^\pm\zeta(3)}
  \prod_{p|f}
  \frac{1}{p^2+p+1}
  \,,
\end{align*}
and the constant in the secondary term is
\begin{align*}
  d_f^\pm=
  \frac{4m^\pm\zeta(1/3)}{5\Gamma(2/3)^3\zeta(5/3)}
  \prod_{p|f}\frac{p^{2/3}-1}{(p^{5/3}-1)(p-1)}
  =
  O(f^{-2})
  \,.
\end{align*}
On the other hand, when $r>Y$ we will use the estimate 
\[
  M^\pm(f;X)=O(f^{-2+\eps}X)
  \,,
\]
which follows immediately from Lemma~3.4 of~\cite{taniguchi.thorne}, in light of the fact  that $6^{\omega(f)}=O(f^\eps)$.
Consequently,
\begin{align*}
N^\pm(f;X)
&=
\sum_{r\leq Y}
\left(
\mu(r)c_{rf}X+
O((rf)^{-2}X^{5/6})
+
O((rf)^{16/9}X^{7/9+\eps})
\right)
+
\sum_{r>Y}O((rf)^{-2+\eps}X)
\\
&=
X\sum_{r\leq Y}\mu(r)c_{rf}
+
O(f^{-2}X^{5/6})+O(Y^{25/9}f^{16/9}X^{7/9 +\eps})+O(Y^{-1+\eps}f^{-2+\eps}X)
\,.
\end{align*}
We compute the constant in the main term:
\begin{align*}
  \sum_{r\leq Y}\mu(r)c_{rf}
  &=
  \sum_{r}\mu(r)c_{rf} + O\left(  \sum_{r> Y}c_{rf}\right)
  \\
  &=
  \frac{1}{2\zeta(3)n^\pm}\prod_{p|f}\frac{1}{p^2+p+1}\sum_{(r,3f)=1}\mu(r)\prod_{p|r}\frac{1}{p^2+p+1}
  +
O\left(  \sum_{r> Y}(rf)^{-2+\eps}\right)
  \\
  &=
  \frac{1}{2\zeta(3)n^\pm}\prod_{p|f}\frac{1}{p^2+p+1}\prod_{p \not  \hspace{2pt} | \, 3f}\left(1-\frac{1}{p^2+p+1}\right)
  +
 O(Y^{-1+\eps}f^{-2+\eps})
 \\
 &=
   \frac{13}{24\zeta(3)n^\pm}\prod_{p|f}\frac{1}{p(p+1)}\prod_{p}\left(1-\frac{1}{p^2+p+1}\right)
  +
 O(Y^{-1+\eps}f^{-2+\eps})
 \\
 &=
   \frac{13}{24\zeta(2)n^\pm}\prod_{p|f}\frac{1}{p(p+1)}
  +
 O(Y^{-1+\eps}f^{-2+\eps})
\end{align*}
Setting $Y=f^{-1}X^{1/17}$ and putting this all together yields the result.
%
\end{proof}

Let $T$ denote the collection of all squarefree $f$ with the property that $p\equiv 2\pmod{3}$ for all primes $p$ dividing $f$.
We have 
\begin{align}
 \nonumber
  N^\pm_1(X) &= \sum_{f\in T}N^\pm(f;X)
  \\
  \nonumber
  &= \sum_{\substack{f\in T\\f\leq X^{1/2}}  }
  \left(     \frac{13}{24\zeta(2)n^\pm}\prod_{p|f}\frac{1}{p(p+1)}\cdot X + O(f^{-1}X^{16/17+\eps}) \right)
 \\
 \nonumber
&=
\frac{13}{24\zeta(2)n^\pm}\cdot X  \sum_{f\in T}\prod_{p|f}\frac{1}{p(p+1)}
+
X\sum_{f>X^{1/2}} O(f^{-2})
+
\sum_{f\leq X^{1/2}}O(f^{-1}X^{16/17+\eps})
\\
\label{E:manip}
&=
 \frac{13}{24\zeta(2)n^\pm}\prod_{p\in T}\left(1+\frac{1}{p(p+1)}\right)\cdot X + O(X^{16/17+\eps}).
 \end{align}
 
 
This establishes the desired formula for $N^\pm_1(X)$.  In order to deal with $N^\pm_2(X)$ we require
a slight modification of the quantity $N(f;X)$.
For each squarefree $f$ coprime to $3$ we write $N'(f;X)$ to denote the number of such fields that are totally ramified at $3$ and the primes dividing $f$ but no other primes \emph{and}
that also satisfy the extra condition $d\equiv 1\pmod{3}$.
(For the remainder of this section we have dropped $\pm$ from most of the notation.)

 \begin{proposition}\label{incl-ex2}
 \[
N'(f;X)
=
   \frac{1}{216\zeta(2)n^\pm}\prod_{p|f}\frac{1}{p(p+1)}  X
   +
O(f^{-1}X^{16/17+\eps})
\,.
\]
 \end{proposition}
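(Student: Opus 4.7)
The plan is to adapt the argument of Proposition~\ref{incl-ex} by imposing two additional local conditions at the prime~$3$: namely, that $3$ is totally ramified in $K$ and that $d\equiv 1\pmod 3$. Define $M'(f;X)$ to be the number of cubic fields $K$ of appropriate sign with $|\Delta|\le X$ that are totally ramified at $3$ and at each prime dividing $f$, and that satisfy $d\equiv 1\pmod 3$, with no restriction at any other prime. Since $3$ is already accounted for in the definition of $M'$, Möbius inversion only needs to sieve out extraneous totally ramified primes coprime to $3f$:
\[
   N'(f;X) = \sum_{(r,3f)=1}\mu(r)\,M'(rf;X).
\]

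Apply Theorem~1.3 of~\cite{taniguchi.thorne} with local conditions prescribed at $3$ and at every prime dividing $rf$. The primes dividing $rf$ contribute the same factor $1/(p^2+p+1)$ each as in Proposition~\ref{incl-ex}, while the condition at $3$ contributes a fixed extra factor $\beta_3$, namely the $3$-adic density of cubic étale $\Q_3$-algebras that are field extensions totally ramified over $\Q_3$ with quadratic resolvent discriminant congruent to $1$ modulo $3$. Consequently
\[
   M'(f;X) = \frac{\beta_3}{2n^\pm\zeta(3)}\prod_{p\mid f}\frac{1}{p^2+p+1}\, X + O\!\left(f^{16/9}X^{7/9+\eps}\right),
\]
with a secondary term of the same shape and $O(f^{-2})$ size as before.

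Setting $Y = f^{-1}X^{1/17}$ and splitting the Möbius sum at $Y$, with the tail bounded by Lemma~3.4 of~\cite{taniguchi.thorne}, every algebraic manipulation in the proof of Proposition~\ref{incl-ex} transfers verbatim. In particular, the Euler product rearrangement, the identity $\prod_p p(p+1)/(p^2+p+1) = \zeta(3)/\zeta(2)$, and the extraction of the $p=3$ Euler factor producing the auxiliary $13/12$ all go through unchanged, with the overall constant merely multiplied by $\beta_3$. This yields
\[
   N'(f;X) = \frac{13\,\beta_3}{24\,\zeta(2)\,n^\pm}\prod_{p\mid f}\frac{1}{p(p+1)}\, X + O\!\left(f^{-1}X^{16/17+\eps}\right).
\]

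The final and main step is the computation of $\beta_3$, which is the only genuinely new ingredient beyond Proposition~\ref{incl-ex}. One must enumerate the totally ramified cubic extensions of $\Q_3$, separate their contributions according to whether the $3$-part of the discriminant is $9$ or $81$, impose the quadratic-residue condition $d\equiv 1\pmod 3$ on each, and sum the appropriate weighted $3$-adic densities from the local tables in~\S6 of~\cite{taniguchi.thorne} (or equivalently \S4 of~\cite{bhargava.shankar.tsimerman}). The resulting value $\beta_3 = 1/117$ then combines with $13/24$ to produce the stated constant $1/216$.
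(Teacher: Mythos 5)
Your outline follows the paper's own route almost exactly: the same M\"obius inversion over auxiliary totally ramified primes $r$ coprime to $3f$, the same appeal to Theorem~1.3 of Taniguchi--Thorne with one extra local condition at $3$, the same cutoff $Y=f^{-1}X^{1/17}$ with the tail handled by Lemma~3.4, and the same Euler-product bookkeeping; the reduction of everything to a single relative local constant $\beta_3$ at $3$ is sound, and your arithmetic $\frac{13}{24}\cdot\frac{1}{117}=\frac{1}{216}$ matches the stated constant.

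The gap is that the value $\beta_3=1/117$ --- which is the only genuinely new ingredient beyond Proposition~\ref{incl-ex} --- is asserted rather than computed: you describe a procedure (enumerate totally ramified cubic extensions of $\Q_3$, sort by discriminant valuation, impose $d\equiv 1\pmod 3$, consult the local tables) but never execute it, and the normalization of your $\beta_3$ (``a $3$-adic density of \'etale algebras'') is left vague at precisely the point where the weighting conventions matter. One must also justify that ``$3$ totally ramified and $d\equiv 1\pmod 3$'' really is a local condition at $3$ admissible in the Taniguchi--Thorne framework; the paper does this by translating it into the congruence $\Delta\equiv 3^4\pmod{3^5}$ and then computing on the forms side via the Delone--Fadeev correspondence: among the $8$ residue classes of binary cubic forms over $\Z/3\Z$ with a triple root, $2/3$ of their lifts modulo $3^2$ are maximal at $3$, and exactly $1/9$ of those $432$ forms satisfy the discriminant congruence, giving $\mu_3(S)=(8/3^4)(2/3)(1/9)=16/2187$, which after dividing by $(1-3^{-2})(1-3^{-3})$ yields the relative factor $1/117$. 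Without this (or an equivalent) explicit local computation, the constant $\frac{1}{216}$ in the proposition is not actually established by your argument.
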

 
\begin{proof}
As before, we can write
\[
  N'(f;X)=\sum_{(r,3f)=1}\mu(r)M'(rf;X)
\]
and apply Theorem~1.3 of~\cite{taniguchi.thorne} to obtain
\[
  M'(f;X)= 
  c'_f X
 + O(f^{-2}X^{5/6}) + O(f^{16/9}X^{7/9+\eps})
  \,.
\]
The calculation of $c'_f$ is identical to that of $c_f$ except for the local factor at the prime $3$.
We need to impose the additional conditions that $3$ is totally ramified and $d\equiv 1\pmod{3}$.
These conditions are definable in terms of congruence conditions on the coefficients of the corresponding cubic form;
indeed, this is equivalent to saying $\Delta\equiv 3^4\pmod{3^5}$.

At this juncture, computing the local densities on the ``forms side'' via the Delone--Fadeev correspondence
is more convenient; please see \S4 of~\cite{bhargava.shankar.tsimerman} for details regarding
this type of local computation.
Let $S$ denote the collection of integral binary cubic forms having a triple root modulo $3$,
satisfying the maximality condition at $3$,
and satisfying our congruence condition on the discriminant.
Let $\mu_3(S)$ denote the $3$-adic density of the $p$-adic closure of $S$ in $\Z_3^4$.
(Here the additive measure $\mu_3$ is normalized so that $\mu_3(\Z_3^4)=1$.)
In this notation, we have
\[
  c'_f=\mu_3(S)(1-3^{-2})^{-1}(1-3^{-3})^{-1}c_f
  \,.
\]
All that remains is to compute $\mu_3(S)$.
Since maximality is a condition modulo~$3^2$, we may do all our calculation modulo~$3^2$.

There are $8$ forms over $\Z/3\Z$ with a triple root
and $2/3$ of the lifts of these forms to $\Z/3^2\Z$ are maximal at $3$.
For each of these $432$ forms, we compute the discriminant via the standard formula and then check whether the fundamental part of the discriminant
satisfies $d\equiv 1\pmod{3}$.  Precisely $48$ of these fit the bill, in other words, $1/9$ of the forms under consideration.
It follows that the $3$-adic density is
$\mu_3(S)=(8/3^4)(2/3)(1/9)=16/2187$.

Observe that
\[
  \frac{16/2187}{(1-3^{-2})(1-3^{-3})}=1/117
  \,,
\]
which leads to
\[
  c_f'
  =
    \frac{1}{234\zeta(3)n^\pm}
  \prod_{p|f}
  \frac{1}{p^2+p+1}
  \,.
\]
(The extra factor can also be computed as $(1/9)(3^2+3+1)^{-1}=1/117$.)
The rest of the proof proceeds exactly as in the proof of Proposition~\ref{incl-ex}.
 \end{proof}

Applying the previous proposition and following the same procedure
we used to obtain our formula for $N_1(X)$
yields
\[
  N_2(X)=
   X
     \frac{1}{216n^\pm \zeta(2)}
   \prod_{p\in T}\left(1+  \frac{1}{p(p+1)}\right) + O\left(X^{16/17+\eps}\right).
\]
Finally, subtracting, we have
\[
  N(X)
  = 
   X
     \frac{29}{54n^\pm\zeta(2)}
   \prod_{\substack{p\equiv 2\pmod{3}}}\left(1+  \frac{1}{p(p+1)}\right) + O\left(X^{16/17+\eps}\right).
\]
This proves Theorem~\ref{T:1}, and Corollary~\ref{C:1} follows.


\section{The average genus number}\label{ave}
First, we verify that the cyclic fields do not contribute to the average.
When $K$ is cyclic, we have $\Delta\in\{f^2, (3f)^2, (9f)^2\}$ and,
by Theorem~\ref{T:frohlich}, $ g_K =   3^{e-1} $;
in this case, $e$ is the number of odd primes that are (totally) ramified in $K$.
 Therefore, we have
\[
  \sum_{\substack{0<\pm \Delta\leq X\\ K\text{cyclic}}}
  g_K
  \leq
  \sum_{k=1}^\infty \pi_k(\sqrt{X})3^{k-1}
  \,,
\]
where $\pi_k(y)$ denotes the number of positive integers $\leq y$ with exactly $k$ prime factors.
Setting $y=\sqrt{X}$ and using the fact (see~\cite{landau}) that 
\[
  \pi_k(y)\sim\frac{y}{\log y}\frac{(\log\log y)^{k-1}}{(k-1)!}
\]
we obtain
\[
  \sum_{\substack{0<\pm \Delta\leq X\\ K\text{cyclic}}}
  g_K
  =
  O\left(
  \frac{y}{\log y}
  \sum_{k=1}^\infty
  \frac{(3\log\log y)^{k-1}}{(k-1)!}
  \right)
  =
  O\left(
  X^{1/2}(\log X)^2
  \right)
  \,.
\]
The fact that the above expression is $o(X)$ tells us that
cyclic fields do not contribute to the average genus number.

We now turn to the main part of the proof.
Define $\psi(n)$ to be the number of primes $p$ dividing $n$ satisfying $p\equiv 1\pmod{3}$.
As we are ignoring cyclic fields, everything that follows holds up to an error of $O(X^{1/2+\eps})$.
We have
\begin{align*}
  \sum_{0<\pm \Delta\leq X}g_K
  &=
    \sum_{0<\pm \Delta\leq X}  3^{\psi(f)}
  -\sum'_{\substack{0<\pm \Delta\leq X}}  3^{\psi(f)}
  +\sum'_{\substack{0<\pm \Delta\leq X}}  3^{{\psi(f)}+1}
  \\
    &=
  \sum_{0<\pm \Delta\leq X}  3^{\psi(f)}
  +2\sum'_{\substack{0<\pm \Delta\leq X}}  3^{\psi(f)}
  \\
  &=
  \sum_{(f,3)=1}^\flat 3^{\psi(f)} N(f;X)
   +2
  \sum_{(f,3)=1}^\flat 3^{\psi(f)} N'(f;X),
\end{align*}
where $\displaystyle \sum'$ denotes only summing over those fields where $3$ is totally ramified
with $d\equiv 1\pmod{3}$
and the $\displaystyle \sum^\flat$ denotes summing over squarefree $f$.

Applying Proposition~\ref{incl-ex} to compute the first sum above, we obtain:
\begin{align*}
  \sum_{(f,3)=1}^\flat 3^{\psi(f)} N(f;X)
   &=
  \frac{13}{24\zeta(2)n^\pm} X 
    \sum_{\substack{(f,3)=1\\f\leq X^{1/2}}}^\flat 3^{\psi(f)} \prod_{p|f}\frac{1}{p(p+1)}
    +
        \sum_{\substack{(f,3)=1\\f\leq X^{1/2}}}^\flat 3^{\psi(f)}
        O(f^{-1}X^{16/17+\eps})\,
\end{align*}
After performing manipulations similar to (\ref{E:manip}) this yields
\[
          \frac{13}{24n^\pm\zeta(2)}X
  \prod_{p\neq 3} \left(1+\frac{3^{\psi(p)}}{p(p+1)}\right)
  +
  \sum_{f>X^{1/2}}3^{\psi(f)}O(f^{-2}X)+
   \sum_{f\leq X^{1/2}} 3^{\psi(f)}O(f^{-1}X^{16/17+\eps})\,.
\]
The main term is
\begin{align*}
  \frac{13}{24n^\pm\zeta(2)}
  X\prod_{p\equiv 1\pmod{3}} \left(1+\frac{3}{p(p+1)}\right)\prod_{p\equiv 2\pmod{3}} \left(1+\frac{1}{p(p+1)}\right).
\end{align*}
Because $3^{\psi(f)}=O(f^\eps)$, the error term is
\begin{align*}
  \sum_{f>X^{1/2}}O(f^{-2+\eps}X)+
   \sum_{f\leq X^{1/2}} O(f^{-1+\eps}X^{16/17+\eps})\,
=
O(X^{16/17+\eps})
\,.
\end{align*}

In exactly the same manner, we apply Proposition~\ref{incl-ex2} to
compute the second term
\[
  \sum_{(f,3)=1}^\flat 3^{\psi(f)} N'(f;X) 
  \,.
\]
This simply results in multiplying the first outcome by a factor of $1/117$.  Thus the whole sum is the first term multiplied by $1+2/117=119/117$.  Hence, we obtain 
\[
 \frac{119}{216n^\pm\zeta(2)}
  X\prod_{p\equiv 1\pmod{3}} \left(1+\frac{3}{p(p+1)}\right)\prod_{p\equiv 2\pmod{3}} \left(1+\frac{1}{p(p+1)}\right)
 +O(X^{16/17+\eps})
 \,.
\]
Dividing the above by $1/(2n^\pm\zeta(3))$ yields the desired expression,
thereby proving Theorem~\ref{T:2}.

\section{Counting cubic fields with given genus number} \label{dist}

 As before, $\mathcal{G}^\pm$ will denote the collection of all cubic fields with $\sgn(\Delta)=\pm 1$.
 We now let $\mathcal{F}^\pm \subseteq \mathcal{G}^\pm$ denote the collection of all cubic fields $K$ with $g_K=3^k$. As before, define $N^\pm(X)=\#\{K\in\mathcal{F}^\pm: |\Delta|\leq X\}$. Let $T_k$ denote the collection of squarefree integers $n$ coprime to $3$ with $\psi(n) = k$ (i.e., having exactly $k$ prime factors $p$ satisfying $p\equiv 1\pmod{3}$).
\begin{align*}
\mathcal{F}^\pm_1&=\{K\in\mathcal{G}^\pm\mid f \in T_k\}\\
\mathcal{F}^\pm_2&=\{K\in\mathcal{G}^\pm\mid f\in T_k, \text{ $3$ is totally ramified, and } d\equiv 1\pmod{3}\}\\
\mathcal{F}^\pm_3&=\{K\in\mathcal{G}^\pm\mid f\in T_{k-1}, \text{ $3$ is totally ramified, and } d\equiv 1\pmod{3}\}.
\end{align*}

By Theorem~\ref{T:frohlich}, we have $\mathcal{F}^\pm=(\mathcal{F}^\pm_1\setminus\mathcal{F}^\pm_2)\cup\mathcal{F}^\pm_3$,
and therefore $N^\pm(X)=N^\pm_1(X) - N^\pm_2(X)+N^\pm_3(X)$ where
$N^\pm_i(X)=\#\{K\in\mathcal{F}^\pm_i : |\Delta|\leq X\}$.
Now we can proceed exactly as in Section~\ref{genus1} to find
\[
N^\pm_1(X) =  \frac{13}{24\zeta(2)n^\pm}X \sum_{f\in T_k} \prod_{p|f}\frac{1}{p(p+1)}
+O(X^{16/17+\eps})
\]
and we multiply by $1/117$  to obtain
\[
N^\pm_2(X) =  \frac{1}{216\zeta(2)n^\pm}X \sum_{f\in T_k} \prod_{p|f}\frac{1}{p(p+1)}
+ O(X^{16/17+\eps})
\,.
\]
Similarly, we obtain
\[
N^\pm_3(X) =  \frac{1}{216\zeta(2)n^\pm}X \sum_{f\in T_{k-1}} \prod_{p|f}\frac{1}{p(p+1)}
+ O(X^{16/17+\eps})
\,,
\]
and this makes the desired proportion equal to
\[
  \frac{\zeta(3)}{\zeta(2)}
  \left[
  \frac{29}{27}\sum_{f\in T_k} \prod_{p|f}\frac{1}{p(p+1)}
  +
    \frac{1}{108}\sum_{f\in T_{k-1}} \prod_{p|f}\frac{1}{p(p+1)}
    \right]
  \, .
\]
We include here a table of approximations to the first few percentages.
\begin{center}
\begin{tabular}{|c|c|c|c|c|c|}
\hline
$k$ & $0$ & $1$ & $2$ & $3$  \\
\hline
proportion & $96.23\%$ & $3.72\%$ & $0.05\%$ & really small! \\
\hline
\end{tabular}
\end{center}
This concludes the proof of Theorem~\ref{T:3}.

We note that the formulas for $N^\pm_1$, $N^\pm_2$, $N^\pm_3$ just derived can
essentially be used to establish Theorems~\ref{T:1},~\ref{T:2}, and~\ref{T:3}
but we have chosen to structure the paper in this manner for clarity of exposition.

\section{Norm-Euclidean cubic fields} \label{NE}

Davenport showed that there are only finitely many norm-Euclidean cubic fields with
negative discriminant~\cite{davenport.euclidean}.
Heilbronn showed that there are only finitely many norm-Euclidean cyclic cubic fields with positive
discriminant~\cite{heilbronn.cubic},
and the first author completely determined these fields under
the GRH~\cite{mcgown.cyclic.cubic1, mcgown.cyclic.cubic2}.
This leaves open the case of non-cyclic totally real cubic fields.
In fact, Heilbronn says that he would ``be surprised to learn that the analogue of [the finiteness theorem] is true in this case''.
Lemmermeyer carried out computations (up to discriminant $1.3\cdot 10^4$)
in this setting (see \cite{lemmermeyer})
and observed that the percentage of norm-Euclidean fields was decreasing, and consequently he stated that
``it is tempting to conjecture that the norm-Euclidean cubic fields have density 0.''

This leads to the following problem:  Give an upper bound
on the proportion of totally real cubic fields that are norm-Euclidean.  To our knowledge,
no one has given a nontrivial upper bound in this setting, i.e., a bound less than $100\%$.
The first thing one might try to do is to use genus theory;
in light of Corollary~\ref{C:1}, one knows that less than $96.24\%$ of totally real cubic fields are norm-Euclidean.
The question then is whether one can improve on the upper bound coming from genus theory.
Theorem~\ref{T:euclidean.genus} accomplishes this, albeit very modestly.
In order to give our results in more detail, we must first state Heilbronn's criterion
in our situation.

Let $K$ be a totally real cubic field, and adopt the previous notation for $\Delta,d,f$.
Denote by $F$ the product of all the totally ramified primes in $K$.  Notice that $F=f$ or $F=3f$
depending upon whether $3$ is totally ramified.
The following is the natural adaptation to our setting of a result of Heilbronn
on cyclic cubic fields, which has its roots in a theorem of Erd\"os--Ko~\cite{erdos.ko};
it is also a special case of a more general theorem due to Egami~\cite{egami} who attributed it to Lenstra.
\begin{lemma}[Heilbronn's criterion]\label{L:Heilbronn}
If we can write $F=a+b$ with $a,b\in\Z^+$ where
$a,b$ are not norms and $a$ is a cubic residue modulo $F$,
then $K$ is not norm-Euclidean.
\end{lemma}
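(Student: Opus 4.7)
The plan is a proof by contradiction in the style of Heilbronn and Erd\H{o}s--Ko. Assume $K$ is norm-Euclidean. Applying the Euclidean property at the rational point $\xi = a/F \in \Q \subset K$ produces $\alpha \in \OK$ with $|N_{K/\Q}(a/F - \alpha)| < 1$, which, after clearing denominators using $|N_{K/\Q}(F)| = F^3$, yields a nonzero element
\[
\beta := a - F\alpha \in \OK, \qquad \beta \equiv a \pmod{F\OK}, \qquad 1 \leq |N_{K/\Q}(\beta)| < F^3.
\]
The strategy is to combine the cubic residue condition with these constraints to force $|N(\beta)| = a$ (or $|N(\beta)| = b$, via the parallel application at $\xi = b/F$, noting that $b \equiv -a \equiv (-c)^3 \pmod F$ is automatically a cubic residue as well), contradicting the non-norm hypothesis on $a$ and $b$.

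The crux is a local analysis at each totally ramified prime $p \mid F$. Writing $p\OK = \mathfrak{p}^3$ with $N(\mathfrak{p}) = p$, and letting $c \in \Z$ be furnished by the cubic residue hypothesis so that $a \equiv c^3 \pmod F$, we have $\beta \equiv c^3 \pmod{\mathfrak{p}^3}$. A Hensel-type lifting of cube roots in the $\mathfrak{p}$-adic completion (clean at $p \neq 3$; a direct calculation in $\Z_3[\pi]/(\pi^3)$ handles $p = 3$) shows that $\beta$ is locally a cube at $\mathfrak{p}$ up to a unit, so $v_\mathfrak{p}(\beta) \equiv 0 \pmod 3$. Thus the ramified primes contribute a perfect cube to $|N(\beta)|$, and factoring $(\beta) = \mathfrak{c}^3 \mathfrak{a}$ with $\mathfrak{c}$ supported on primes of $F$ and $\mathfrak{a}$ coprime to $F$, the elementary identity $N(\beta) \equiv a^3 \pmod F$ (from expanding $\prod_\sigma(a - F\sigma(\alpha))$) together with $|N(\beta)| < F^3$ and a ray-class analysis of $\mathfrak{a}$ modulo $F$ pins $|N(\beta)|$ down to $a$, producing the desired contradiction.

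The main obstacle is the local cube-lifting step, especially at $p = 3$ and in the case $\gcd(a, F) > 1$, where one must verify that $v_\mathfrak{p}(\beta)$ is genuinely a multiple of $3$ rather than merely $\geq 3$ (a point that is automatic when $p \nmid a$, so that $c^3$ is a unit at $\mathfrak{p}$, but requires extra care otherwise). The subsequent global identification of $|N(\beta)|$ with $a$ depends on the fact that, under the cubic residue condition, $a$ is already a local norm at every ramified prime of $F$; the class field theoretic bookkeeping for the ray class of $\mathfrak{a}$ modulo $F$ is what upgrades this local data to the global conclusion and is the true arithmetic content of the lemma.
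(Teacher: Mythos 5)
Your plan has a fatal flaw at the very first step: applying the norm-Euclidean property at the rational point $\xi=a/F\in(0,1)$ yields nothing, because $\alpha=0$ already satisfies $|N_{K/\Q}(a/F-0)|=(a/F)^3<1$. So no contradiction can be extracted from this choice of $\xi$; indeed $\beta=a$ itself meets every constraint you list ($\beta\equiv a\pmod{F\OK}$, $1\le|N(\beta)|=a^3<F^3$), and its norm is $a^3$, not $a$, which shows that the hoped-for conclusion ``$|N(\beta)|=a$ or $b$'' is simply false for the data you have. The underlying quantitative problem is that clearing the denominator $F$ gives the bound $|N(\beta)|<F^3$ while the congruence $N(\beta)\equiv a^3\pmod F$ only constrains $N(\beta)$ modulo $F$; roughly $F^2$ values remain, and no amount of ``ray-class analysis'' or local cube-lifting at the ramified primes can pin the norm down. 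The cubic residue hypothesis is also misplaced in your sketch: its role is not to make $b$ a cubic residue or to control $v_{\mathfrak p}(\beta)$, but to supply a rational integer $c$ with $N_{K/\Q}(c)=c^3\equiv a\pmod F$, i.e.\ to make $a$ congruent to a norm.

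The paper does not prove this lemma (it cites Heilbronn, Erd\H{o}s--Ko, and Egami/Lenstra), but the standard argument runs as follows and shows what your approach is missing. Assume $K$ is norm-Euclidean; then $h_K=1$, so the radical ideal $\mathfrak F=\prod_{p\mid F}\mathfrak p$ of the totally ramified primes, which satisfies $\mathfrak F^3=(F)$ and $N\mathfrak F=F$, is principal, say $\mathfrak F=(\lambda)$ with $|N(\lambda)|=F$. Pick $c$ with $c^3\equiv a\pmod F$ and apply the Euclidean property to $\xi=c/\lambda$ (not $a/F$!): there is $\alpha\in\OK$ with $\beta:=c-\alpha\lambda\neq 0$ and $|N(\beta)|<|N(\lambda)|=F$. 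Since each $p\mid F$ is totally ramified, the norm induces $x\mapsto x^3$ on the residue field $\OK/\mathfrak p\cong\Z/p$, so $\beta\equiv c\pmod{\mathfrak F}$ forces $N(\beta)\equiv c^3\equiv a\pmod F$. Now the bound $0<|N(\beta)|<F$ is decisive: if $N(\beta)>0$ then $N(\beta)=a$, and if $N(\beta)<0$ then $-N(\beta)=N(-\beta)=F-a=b$ (odd degree makes the sign harmless), contradicting that neither $a$ nor $b$ is a norm. The essential ideas absent from your proposal are the use of norm-Euclidean $\Rightarrow$ class number one to divide by a generator of $\mathfrak F$ (denominator of norm $F$ rather than $F^3$) and the replacement of $a$ by $c$ via the cubic residue hypothesis; with those in place the conclusion is immediate, with no Hensel lifting or class field theory required.
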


One amusing observation is that if $p \not\equiv 1 \pmod{3}$ then every number is a cubic residue modulo $p$,
so Heilbronn's criterion is more easily verified in the genus number one setting (where all $p$ dividing $F$ have this property).

\begin{lemma}
Suppose $K$ has genus number one.
If we can write $F=a+b$ with $a,b\in\Z^+$ where
$a,b$ are not norms, then $K$ is not norm-Euclidean.
\end{lemma}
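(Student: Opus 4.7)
The plan is to show that this lemma is essentially an immediate corollary of Heilbronn's criterion (Lemma~\ref{L:Heilbronn}), in view of the remark preceding it. Specifically, under the hypothesis $g_K=1$ (in the non-cyclic setting that is of interest for Section~\ref{NE}), I would verify that the clause ``$a$ is a cubic residue modulo $F$'' in Heilbronn's criterion is automatic, so that the only nontrivial content is that $a$ and $b$ are not norms.

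First I would invoke Theorem~\ref{T:frohlich}: for non-cyclic $K$, $g_K=1$ is equivalent to $e=0$, i.e.\ no odd prime $p$ totally ramified in $K$ satisfies $(d/p)=1$. Translating via the equivalences recorded in Section~\ref{genus1} ($(d/p)=1 \iff p\equiv 1\pmod 3$ for odd $p\neq 3$, and $(d/3)=1 \iff d\equiv 1\pmod 3$), this says that every odd prime dividing $F$ other than $3$ is $\equiv 2\pmod 3$, and if $3\mid F$ then $d\not\equiv 1\pmod 3$. Since $2\equiv 2\pmod 3$ and $3\not\equiv 1\pmod 3$ trivially, every prime divisor $p$ of $F$ satisfies $p\not\equiv 1\pmod 3$.

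Next I would observe that for such $p$, the cubing map $x\mapsto x^3$ is a bijection on $\Z/p\Z$: on $(\Z/p\Z)^\times$ this follows from $\gcd(3,p-1)=1$ when $p\neq 3$, and for $p=3$ one has $x^3\equiv x\pmod 3$ directly, while $0=0^3$ handles the zero class. In particular, every integer is a cubic residue modulo $p$. Because $F$ is squarefree (the prime $3$ contributes at most once, regardless of whether $\Delta$ takes the form $9df^2$ or $81df^2$), the Chinese Remainder Theorem assembles these local cube roots into a global one: every integer, in particular our $a$, is a cubic residue modulo $F$, with no coprimality hypothesis needed.

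With that cubic residue condition automatic, applying Lemma~\ref{L:Heilbronn} to the decomposition $F=a+b$ immediately gives that $K$ is not norm-Euclidean. There is no real obstacle here; the lemma is just the crystallization of the remark that genus number one forces every ramified prime to lie outside the residue class $1\pmod 3$. The one small point worth being careful about is the case $\gcd(a,F)>1$ in the CRT step, which is precisely where the squarefreeness of $F$ is used.
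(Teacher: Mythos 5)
Your proposal is correct and follows exactly the route the paper intends: the lemma is stated as an immediate consequence of the observation preceding it, namely that genus number one (for the non-cyclic fields relevant to Section~\ref{NE}) forces every prime dividing $F$ to satisfy $p\not\equiv 1\pmod 3$, so cubing is a bijection modulo each such $p$ and hence modulo the squarefree $F$, making the cubic residue clause in Lemma~\ref{L:Heilbronn} automatic. Your write-up merely makes explicit the CRT step and the handling of $p=2,3$ that the paper leaves implicit.
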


Let $H(X)$ denote the number of genus number one cubic fields with $0<\Delta\leq X$ to which Heilbronn's criterion applies,
and let $H(F;X)$ denote the number of such fields with fixed $F$.  We have
\[
  H(X)=\sum_{F\leq X^{1/2}}H(F;X)
\]

\begin{proposition}\label{P:HFX}
We have
\[
  H(F;X)=\frac{b_F}{12\zeta(2)}\prod_{p|F}\frac{1}{p(p+1)}X + O(e^{4F/17}X^{16/17+\eps})
\]
for some explicitly computable $b_F\in\Q\cap[0,1]$.
\end{proposition}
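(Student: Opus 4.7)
The plan is to reduce the count $H(F;X)$ to a count of cubic fields satisfying specified local conditions at a set of primes bounded in terms of $F$, and then to invoke the refined Taniguchi--Thorne theorem in essentially the same manner as in the proof of Proposition~\ref{incl-ex}.

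First, I would enumerate the admissible decompositions $F=a+b$ with $a,b\in\Z^+$ and $a$ a cubic residue modulo $F$; there are at most $F-1$ of these. For a fixed admissible pair $(a,b)$ and a genus-one cubic field $K$ whose totally-ramified-prime product equals $F$, the condition that neither $a$ nor $b$ is a norm from $K$ is governed by the splitting behavior of the primes dividing $abF$ in $K$, and this splitting behavior is expressible as a congruence condition on the coefficients of the corresponding binary cubic form (via the Delone--Fadeev correspondence) modulo an integer bounded polynomially in $F$. Inclusion--exclusion over the admissible pairs $(a,b)$ then expresses the indicator of ``Heilbronn's criterion applies to $K$'' as a signed sum of indicators of events cut out by such local conditions.

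Next I would handle each signed summand by mimicking the proof of Proposition~\ref{incl-ex}: apply Theorem~1.3 of~\cite{taniguchi.thorne} with the prescribed local conditions at the finitely many relevant primes (those dividing $3F$, together with the auxiliary primes coming from the pair $(a,b)$), perform M\"obius inversion over squarefree extra ramification at primes coprime to this set, and optimize the truncation parameter in analogy with $Y=f^{-1}X^{1/17}$. Each resulting main term has shape (local density) $\times\, \frac{1}{12\zeta(2)}\prod_{p\mid F}\frac{1}{p(p+1)}X$, and the signed sum of these local densities is the constant $b_F$. By construction $b_F\in\Q$, and $b_F\in[0,1]$ because it is the limiting proportion, among genus-one cubic fields with the prescribed totally-ramified set, of those to which Heilbronn's criterion applies.

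The principal obstacle is obtaining uniformity of the error in $F$. Inclusion--exclusion over the admissible pairs produces up to $2^{F-1}$ signed summands, each contributing an error of order $F^{O(1)}X^{16/17+\eps}$ after the truncation parameter is chosen as above; combining these, with a careful choice of constants, inflates the total error to at most $e^{4F/17}X^{16/17+\eps}$. The exponential dependence on $F$ is the unavoidable cost of this inclusion--exclusion, but it is mild enough to be tolerable in the eventual application, where $H(F;X)$ will ultimately be summed only over a range of $F$ much smaller than $\log X$.
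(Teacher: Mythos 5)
Your overall strategy --- reduce Heilbronn's condition to splitting conditions at primes below $F$, count with Theorem~1.3 of Taniguchi--Thorne plus M\"obius inversion over extra total ramification, and optimize a truncation parameter as in Proposition~\ref{incl-ex} --- is the same engine the paper uses. The paper organizes the combinatorics differently: since whether the criterion applies depends only on which primes $p<F$ are inert, it partitions $H(F;X)$ as a disjoint, unsigned, finite sum $\sum_{\I\text{ admissible}}H(F,\I;X)$ over the exact inert set $\I$, computes each $H(F,\I;X)$ in Proposition~\ref{P:HFIX} by prescribing the splitting type at \emph{every} prime $p<F$, and reads off $b_F=\sum_{\I}\prod_{p<F,\,p\nmid F}a_p/(1+p^{-1})$ (times $8/9$ when $3\mid F$), which is visibly rational and at most $1$ because the local factors at each prime sum to $1$. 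Your inclusion--exclusion over decompositions $F=a+b$ is workable in principle (each intersection of events is still a union of congruence conditions at primes below $F$), and your cubic-residue bookkeeping is harmless since that condition is automatic in the genus number one setting.

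The genuine gap is in your error analysis. First, the claim that each signed summand contributes only $F^{O(1)}X^{16/17+\eps}$ is too optimistic: a higher-order intersection in your inclusion--exclusion imposes conditions at the primes dividing many of the products $ab$, a set whose product can be as large as $e^{(1+o(1))F}$, and the Taniguchi--Thorne error grows with this conductor. This is precisely the source of the exponential factor in the paper: in Proposition~\ref{P:HFIX} the pre-optimization error carries $e^{8f/9}$ from conditioning at all primes below $f$, and choosing $Y=X^{1/17}f^{-13/17}e^{-4f/17}$ is what yields $e^{4f/17}$. Second, even granting your per-summand bound, your accounting does not give the stated error: up to $2^{F-1}$ summands of size $F^{O(1)}X^{16/17+\eps}$ produce $e^{F\log 2+o(F)}X^{16/17+\eps}$, which exceeds $e^{4F/17}X^{16/17+\eps}$. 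An error of the form $e^{O(F)}X^{16/17+\eps}$ would still suffice for Theorem~\ref{P:HX}, where $F$ is at most a power of $\log X$, but as written your argument neither identifies the true source of the exponential dependence on $F$ nor establishes the error term asserted in the proposition; the unsigned partition over inert sets $\I$, with a single application of Proposition~\ref{P:HFIX} per $\I$, is what gives the paper its clean uniform bound.
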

\begin{center}
\begin{tabular}{|c|c|c|c|c|c|c|c|c|c|c|}
\hline
$F$ & $1$ & $2$ & $3$ & $5$ & $6$ &  $10$ & $11$ & $15$ & $17$ & $22$\\ 
\hline
&&&&&&&&&&\\[-1.5ex]
$b_F$ & $0$ & $0$ & $0$ & $\displaystyle\frac{1}{18}$& $0$& $\displaystyle\frac{7}{96}$ &
$\displaystyle\frac{55}{288}$ &
$\displaystyle\frac{1574}{15309}$ & $\displaystyle\frac{231205}{653184}$ & $\displaystyle\frac{1292771}{4354560}$ \\[2ex]
\hline
$\approx$ & $0$ & $0$ & $0$ & $0.556$& $0$& $0.0729$ & $0.191$ & $0.103$ & $0.354$ & $0.297$ \\
\hline
\end{tabular}
\end{center}


Using the previous proposition, we will prove the main result of this section which
immediately implies Theorem~\ref{T:euclidean.genus}.

\begin{theorem}\label{P:HX}
We have $H(X)\sim B X$ with
$5.7\cdot 10^{-4}\leq B\leq 6.1\cdot 10^{-4}$.
Consequently, Heilbronn's criterion applies to strictly between $4/5$ of a percent and $1$ percent of all totally real cubic fields with genus number one.
\end{theorem}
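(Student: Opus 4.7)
My plan is to substitute the formula from Proposition~\ref{P:HFX} into $H(X) = \sum_{F \leq X^{1/2}} H(F;X)$. Formally, this identifies
\[
B = \frac{1}{12\zeta(2)} \sum_F \frac{b_F}{\prod_{p|F} p(p+1)},
\]
where the sum runs over squarefree $F$ whose prime factors belong to $\{3\} \cup \{p : p \equiv 2 \pmod{3}\}$. Since $b_F\in[0,1]$ and the Euler product $\prod_p (1 + 1/(p(p+1)))$ converges, this series is absolutely convergent.

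To justify the asymptotic rigorously, I would introduce a cutoff $F_0$, a large constant independent of $X$, and split the sum at $F_0$. For $F \leq F_0$, Proposition~\ref{P:HFX} produces the main term plus an error of size $O_{F_0}(X^{16/17+\varepsilon}) = o(X)$. For $F > F_0$, I use the trivial bound $H(F;X) \leq N(f;X)$ (or $N'(f;X)$, depending on whether $3$ is ramified), and invoke Propositions~\ref{incl-ex} and~\ref{incl-ex2} to bound the contribution by $O(X) \cdot \sum_{F>F_0} \prod_{p|F} 1/(p(p+1))$. This tail, coming from a convergent Euler product, can be made arbitrarily small by choosing $F_0$ large. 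Letting $F_0 \to \infty$ after dividing by $X$ yields $H(X)/X \to B$.

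For the numerical bounds, I would extend the table of $b_F$ values given in Proposition~\ref{P:HFX} out to some explicit threshold $F_1$. The partial sum
\[
B_{F_1} = \frac{1}{12\zeta(2)} \sum_{F \leq F_1} \frac{b_F}{\prod_{p|F} p(p+1)}
\]
is immediately a lower bound for $B$, since all terms in the tail are nonnegative. For the upper bound, I replace $b_F$ by $1$ on the tail and compute
\[
\sum_{F > F_1} \prod_{p|F} \frac{1}{p(p+1)} = \prod_{p=3\,\text{or}\,p\equiv 2(3)} \left(1 + \frac{1}{p(p+1)}\right) - \sum_{F \leq F_1} \prod_{p|F} \frac{1}{p(p+1)},
\]
the full Euler product being effectively computable. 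Choosing $F_1$ large enough to drive the upper-minus-lower gap below roughly $4\times 10^{-5}$ confines $B$ to the advertised window, and the reduction to percentages follows from Theorem~\ref{T:1}.

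The principal obstacle is the computation of $b_F$ for sufficiently many $F$. By construction, $b_F$ records the density (within genus-one fields of conductor $F$) of those for which the Heilbronn decomposition $F=a+b$ can be realized with both $a$ and $b$ being non-norms. In the genus-one setting, every integer coprime to $F$ is automatically a cubic residue modulo $F$ (by the observation following Lemma~\ref{L:Heilbronn}), so the criterion reduces to the non-norm condition alone, which is determined by the splitting data at primes dividing $F$. This makes each $b_F$ a finite rational number computable by enumerating splitting types, but the enumeration grows quickly with the number of prime factors of $F$; pushing $F_1$ high enough to obtain the claimed tight interval is the most laborious part of the argument.
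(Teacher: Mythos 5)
Your argument is correct and is essentially the paper's own proof: the paper likewise splits $\sum_F H(F;X)$ at a cutoff (a small power of $\log X$, rather than your fixed $F_0$ with $F_0\to\infty$ afterward), applies Proposition~\ref{P:HFX} below the cutoff and an $O(F^{-2+\eps}X)$ tail bound above it, arriving at $B=\frac{1}{12\zeta(2)}\sum_F b_F\prod_{p\mid F}\frac{1}{p(p+1)}$, and the numerical window is obtained exactly by your truncation scheme (partial sum as lower bound, $b_F\le 1$ plus the computable Euler product for the upper bound). One small correction to your closing description: $b_F$ is determined by the inertness data at the primes $p<F$ not dividing $F$ (the admissible sets $\mathcal{I}$ of Proposition~\ref{P:HFIX}), not by ``splitting data at primes dividing $F$'' --- those primes are totally ramified by the definition of $F$.
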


Admittedly, the proportion
in the previous theorem is rather small.
The main interest here is to know that such a density exists and is positive.
We obtain the rather weak corollary that less than
$96\%$ of totally real cubic fields are norm-Euclidean.
However, this theorem does say something about the limitations of these methods;
in particular, one cannot hope to beat $95\%$ by using only  genus theory and totally ramified primes
(via Lemma~\ref{L:Heilbronn}) --- one would need to inject some new ideas.
In principle, one could use the ideas presented here to compute $B$
more accurately, but we have not pursued this (because of the reasons just mentioned).

\begin{remark*}
Even though the total proportion given in Theorem~\ref{P:HX} is rather small,
if we only consider fields where $F$ is large we can give much stronger results.
For example, when $F=167$, we compute $B_F\approx 0.9421$.  In fact, the precise number is:
\[
\hspace{-2ex}
\frac{5707366742127207720711393876905481748779979640006818006913}{6058037125307413601957148346537399067112071383363249766400}
\]
The upshot is the following:
Suppose we know that $167$ is the only totally ramified prime in $K$.
Then $K$ has less than a $5.8\%$ chance of being norm-Euclidean.
\end{remark*}

Before launching into the proofs, we recast Heilbronn's criterion into a form that
is more convenient for our purposes.
We define the set
\[
  S=\{a\in\Z\mid 0<a<F\,,\;a\notin N_{K/\Q}(\OK)\}
  \, ,
\]
and rewrite the condition in Heilbronn's criterion as
\[
  (\dagger)\;  \exists \, a\in S\text{ such that } F-a\in S
  \,.
\]
Recall that $n\neq 0$ is a norm if and only if $3 | v_p(n)$ for all inert $p | n$.
In light of this, we immediately see that $(\dagger)$ holds iff there exists an $a\in(0,F)$ and a pair
of inert primes $\{p,q\}$ such that $p|a$, $q|F-a$, and $3\ndiv v_p(a)v_q(F-a)$.
If this condition is satisfied, we call the set of two primes $\{p,q\}$ a Heilbronn pair for~$F$.

\begin{example*}
We find all the Heilbronn pairs for $F=11$.
By symmetry, there are $5$ possible choices of $(a,F-a)$ we must consider.
We can reject the values $(1,10)$ and $(3,8)$ because they contain cubes, which leaves three remaining choices for $(a,F-a)$.
The choice $(2,9)$ leads to the H-pair $\{2,3\}$, the choice $(4,7)$ leads to the H-pair $\{2,7\}$, and the choice
$(5,6)$ leads to the H-pairs $\{5,2\}$, $\{5,3\}$.  In summary, the Heilbronn pairs
for $F=11$ are $\{2,3\}$, $\{2,5\}$, $\{2,7\}$, $\{3,5\}$.  This means
Heilbronn's criterion applies to a cubic field $K$ with $F=11$ if and only if
the collection of primes that are inert in $K$ contains at least one of these four $H$-pairs.
\end{example*}

Let $\I$ be a subset of the primes $p$ with  $p<F$.
We say that $\I$ is admissible if it contains both primes in a Heilbronn pair.
Let $H(F,\I;X)$ denote the number of such fields
where $\I$ is exactly the inert primes less than $F$.
In light of discussion above, we have
\begin{equation}\label{E:finite}
  H(F;X)=\sum_{\I\text{ admissible}}H(F,\I;X)
  \,.
\end{equation}
\begin{proposition}\label{P:HFIX}
Suppose $3$ is not totally ramified in $K$.  Then
\[
H(f,\I;X)=
\frac{1}{12\zeta(2)}\prod_{p|f}\frac{1}{p(p+1)}\prod_{\substack{p<f\\p\not |f}}\frac{a_p}{1+p^{-1}}X
+O(e^{4f/17}X^{16/17+\eps})
\]
where
\[
  a_p=
  \begin{cases}
  1/3 & p\in \I\\
  2/3+1/p & p\not\in \I\\
  \end{cases}
\]
\end{proposition}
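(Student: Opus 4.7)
The strategy is to adapt the proof of Proposition~\ref{incl-ex}, imposing additional local conditions at each prime $p<f$ with $p\nmid f$ (prescribing whether $p$ is inert or not) and at the prime $3$ (which must not be totally ramified). First, I would set up the Möbius sieve
\[
  H(f,\I;X)=\sum_{(r,\,3f\prod_{p\in\I}p)=1}\mu(r)\,M^*(f,\I,r;X),
\]
where $M^*(f,\I,r;X)$ counts totally real cubic fields with $0<\Delta\leq X$ such that every prime dividing $rf$ is totally ramified, $3$ is not totally ramified, and for each $p<f$ with $p\nmid f$ the prime $p$ is inert iff $p\in\I$. The coprimality $(r,\prod_{p\in\I}p)=1$ is needed because ``inert'' and ``totally ramified'' are incompatible, so no sieving is performed at the primes of $\I$.

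For $r\leq Y$, I would apply Theorem~1.3 of~\cite{taniguchi.thorne} to write $M^*(f,\I,r;X)=c^*_{f,\I,r}X+(\text{lower order})$. The $p$-adic densities of the splitting types $(1,1,1),(1,2),(3),(1,1^2),(1^3)$ at $p\neq 3$ are $1/6$, $1/2$, $1/3$, $1/p$, $1/p^2$, summing to $M_p=(p^2+p+1)/p^2$. Hence one reads off the local factor $\frac{1}{p^2+p+1}$ at $p\mid rf$ (as in Proposition~\ref{incl-ex}), the factor $\frac{p^2}{3(p^2+p+1)}$ at $p\in\I$ (inert), and, after combining the $p\mid r$ and $p\nmid r$ cases in the Möbius sum, the factor $\frac{p(2p+3)}{3(p^2+p+1)}$ at $p<f$, $p\nmid f$, $p\notin\I$. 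A direct algebraic check shows that in both of these latter cases the effective local factor equals $\frac{a_p}{1+1/p}\cdot\frac{p(p+1)}{p^2+p+1}$, where the second factor coincides with the ``default'' Möbius contribution $1-\frac{1}{p^2+p+1}$. At $p=3$ the ``not totally ramified'' condition contributes $\lambda_3=1-\frac{1}{13}=\frac{12}{13}$.

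Completing the Möbius sum over the remaining unrestricted primes and invoking $\prod_p(1-\frac{1}{p^2+p+1})=\zeta(3)/\zeta(2)$ exactly as in the proof of Proposition~\ref{incl-ex}, the main-term coefficient becomes
\[
  \frac{\lambda_3}{2n^{+}\zeta(3)}\cdot\frac{13\,\zeta(3)}{12\,\zeta(2)}\cdot\prod_{p\mid f}\frac{1}{p(p+1)}\cdot\prod_{\substack{p<f\\p\nmid f}}\frac{a_p}{1+1/p}
  =\frac{1}{12\zeta(2)}\prod_{p\mid f}\frac{1}{p(p+1)}\prod_{\substack{p<f\\p\nmid f}}\frac{a_p}{1+1/p},
\]
using $n^{+}=6$ and the clean cancellation $\lambda_3\cdot(13/12)=1$, which is what the statement asserts.

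The principal obstacle will be the error term. Applying Theorem~1.3 to $M^*(f,\I,r;X)$ now involves finite local conditions at every prime up to $f$, whose radical $\prod_{p<f}p$ grows like $e^{f}$ by the prime number theorem. Tracking the explicit dependence of Theorem~1.3 on this radical yields a per-$r$ error of shape $(rf\,e^{f})^{\alpha}X^{7/9+\eps}$ for an appropriate $\alpha$; balancing against the tail $\sum_{r>Y}$ (controlled via Lemma~3.4 of~\cite{taniguchi.thorne}, as in the proof of Proposition~\ref{incl-ex}) with a choice $Y\asymp X^{1/17}e^{-cf}$ produces the final bound $O(e^{4f/17}X^{16/17+\eps})$. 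Pinning down the precise constant $4/17$ demands careful bookkeeping of how each additional local condition enters Theorem~1.3's implicit dependence and how the resulting exponential factor propagates through the $Y$-optimization.
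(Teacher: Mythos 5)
Your overall strategy---a M\"obius sieve over extra totally ramified primes, Theorem~1.3 of Taniguchi--Thorne with splitting conditions imposed at every prime below $f$, the local densities $1/6,\,1/2,\,1/3,\,1/p,\,1/p^2$, and the algebraic conversion of each local factor into $\frac{a_p}{1+p^{-1}}$---is essentially the paper's argument (the paper sieves only over $r$ all of whose prime factors exceed $f$, since the behaviour at every $p\le f$ is already prescribed; your variant of the sieve is equivalent). However, there is a genuine error in your treatment of the prime $3$. Since $H(f,\I;X)$ requires $\I$ to be \emph{exactly} the set of inert primes below $F=f$, the local condition at $3$ (when $3<f$) is not merely ``$3$ is not totally ramified'': it is ``$3$ is not totally ramified \emph{and} $3$ is inert iff $3\in\I$''. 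Its density is $\frac{1/3}{1+3^{-1}+3^{-2}}=\frac{3}{13}$ or $\frac{1/6+1/2+1/3}{1+3^{-1}+3^{-2}}=\frac{9}{13}$, not $\lambda_3=\frac{12}{13}$; after the same normalization you apply at the other primes this becomes $\frac{a_3}{1+3^{-1}}\in\{\frac14,\frac34\}$, which is precisely the $p=3$ term of the product in the statement (compare the paper's example $F=11$, $\I=\{2,3,5\}$, where the factor $\frac{1/3}{1+3^{-1}}$ appears explicitly). Your claimed ``clean cancellation'' $\lambda_3\cdot\frac{13}{12}=1$ therefore produces the wrong net factor at $3$: the leftover $\frac{13}{12}$ from completing the Euler product is exactly what converts the raw density $\frac{9a_3}{13}$ into $\frac{a_3}{1+3^{-1}}$, so there is no spare $\frac{13}{12}$ available to cancel a $\lambda_3$. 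As written, your main term is either too large by $\frac43$ (if $3\notin\I$) or by $4$ (if $3\in\I$), or else you have silently imposed two overlapping conditions at $3$ whose densities do not multiply. The fix is to treat $3$ on the same footing as the other primes $p<f$, $p\nmid f$ (its local masses happen to have the same shape, with total mass $1+3^{-1}+3^{-2}$ and totally ramified mass $3^{-2}$), which is what the paper does.

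A secondary gap: the error term is only gestured at. The proposition asserts the specific bound $O(e^{4f/17}X^{16/17+\eps})$, and your ``appropriate $\alpha$'' and ``$Y\asymp X^{1/17}e^{-cf}$'' leave it unestablished. The paper's bookkeeping yields $O(f^{-2}X^{5/6})+O(Y^{25/9}f^{8/9}e^{8f/9}X^{7/9+\eps})+O(Y^{-1+\eps}f^{-2+\eps}X)$ and then takes $Y=X^{1/17}f^{-13/17}e^{-4f/17}$; you would need to carry out this optimization explicitly, tracking how the radical $\prod_{p<f}p\approx e^{f}$ enters the $X^{7/9}$-term, before you can claim the exponent $4/17$.
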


\begin{proof}
Our hypothesis gives $\Delta=df^2$.
The constant in the main term is:
\begin{align*}
&
\frac{1}{12\zeta(3)}\prod_{p|f}\frac{1/p^2}{1+p^{-1}+p^{-2}}\prod_{\substack{p<f\\p\not |f}}\frac{a_p}{1+p^{-1}+p^{-2}}
\sum_{\substack{r> f\\(r,\prod_{p\leq f}p)=1}}\mu(r)\prod_{p|r}\frac{1}{p^2+p+1}
\\
&
=
\frac{1}{12\zeta(3)}\prod_{p|f}\frac{1}{p^2+p+1}\prod_{\substack{p<f\\p\not |f}}\frac{a_p}{1+p^{-1}+p^{-2}}
\prod_{p>f}\left(1-\frac{1}{p^2+p+1}\right)
\\
&
=
\frac{1}{12\zeta(2)}\prod_{p|f}\frac{1}{p^2+p+1}\prod_{\substack{p<f\\p\not |f}}\frac{a_p}{1+p^{-1}+p^{-2}}
\prod_{p\leq f}\left(1-\frac{1}{p^2+p+1}\right)^{-1}
\\
&
=
\frac{1}{12\zeta(2)}\prod_{p|f}\frac{1}{p(p+1)}\prod_{\substack{p<f\\p\not |f}}\frac{a_p}{1+p^{-1}}
\end{align*}
We follow the same procedure as in Section~\ref{genus1}.
However, this time there are many more local conditions being imposed.
When the smoke clears, the error term is equal to
\[
  O(f^{-2}X^{5/6})
  +
  O(Y^{25/9}f^{8/9}e^{8f/9}X^{7/9+\eps})
  +
  O(Y^{-1+\eps}f^{-2+\eps}X)
  \,.
\]
Setting $Y=X^{1/17}f^{-13/17}e^{-4f/17}$ yields the error term
\[
  O(f^{-21/17+\eps}e^{4f/17}X^{16/17+\eps})
  \,.
\]
\end{proof}

If $3$ is totally ramified in $K$, then the previous proposition still holds, but with
$f$ replaced by $F=3f$ and the constant in the main term multiplied by a factor
of $8/9$.

\begin{example*}
We return to our example of $F=11$.
We saw that the four H-pairs in this situation are $\{2,3\}$, $\{2,5\}$, $\{2,7\}$, $\{3,5\}$.
Consequently, there are $9$ admissible sets $\I$; namely:
$\{2,3\}$, $\{2,3,5\}$, $\{2,3,7\}$, $\{2,3,5,7\}$, $\{2,5\}$, $\{2,5,7\}$, $\{2,7\}$, $\{3,5\}$, $\{3,5,7\}$.
Consider for the moment the choice $\I=\{2,3,5\}$.  In this situation, the
extra factor is
\[
   \frac{1/3}{1+2^{-1}}\cdot\frac{1/3}{1+3^{-1}}\cdot\frac{1/3}{1+5^{-1}}\cdot\frac{2/3+1/7}{1+7^{-1}}=\frac{85}{7776}
\]
and hence
Proposition~\ref{P:HFIX}  yields
\[
  H(F,\I;X)\sim
  \frac{85}{7776}
  \cdot
  \frac{1}{12\zeta(2)}\prod_{p|f}\frac{1}{p(p+1)}X
\]
For each admissible $\I$ we get an additional rational factor; summing over all
admissible $\I$ yields:
\[
  H(F;X)\sim
    \frac{55}{288}
  \cdot
  \frac{1}{12\zeta(2)}\prod_{p|f}\frac{1}{p(p+1)}X
\]
\end{example*}

\begin{proof}[Proof of Proposition~\ref{P:HFX}]
This follows immediately from Proposition~\ref{P:HFIX}
since the sum appearing in (\ref{E:finite}) is finite.
The calculation of the $b_F$ is along the lines of the previous example; namely, when $F$ is not divisible by $3$,
\[
  b_F=
  \sum_{\I\text{ admissible}}
  \prod_{\substack{p<F\\p\not |F}}\frac{a_p}{1+p^{-1}}
  \, ,
\]
 and $b_F$ is equal to the same
expression times $8/9$ when $F$ is divisible by $3$.
\end{proof}

\begin{proof}[Proof of Theorem~\ref{P:HX}]
Using Proposition~\ref{P:HFX} for $F<Y$ we obtain:
\begin{align*}
  H(X)
  &=
  \sum_{F<Y} H(F;X)
  +
  \sum_{F>Y}
    O(F^{-2+\eps}X)
    \\
    &=  
    \frac{1}{12\zeta(2)}X\sum_{F} b_F\prod_{p|f}\frac{1}{p(p+1)}
    +
      \sum_{F<Y} O(e^{4F/17}X^{16/17+\eps})
      +
    \sum_{F>Y}
    O(F^{-2+\eps}X)
  \\
  &=
      \frac{1}{12\zeta(2)}X\sum_{F} b_F\prod_{p|f}\frac{1}{p(p+1)}
    +
      O(e^{4Y/17}X^{16/17+\eps})
      +
    O(Y^{-1+\eps}X)
\end{align*}
Choosing $Y$ to be a small power of $\log X$ proves the result with
\[
  B= \frac{1}{12\zeta(2)}\sum_{F} b_F\prod_{p|f}\frac{1}{p(p+1)}
  \,.
\]
\end{proof}

\section*{Acknowledgements}
The authors wish to thank Manjul Bhargava, Henrik Lenstra, Carl Pomerance, Frank Thorne, and Melanie Matchett Wood  for productive input.

\nocite{belabas1, belabas2, ishida}
\bibliographystyle{alpha}
\bibliography{counting}

\vspace{1ex}
{\footnotesize
\noindent
Kevin J. McGown\\
Department of Mathematics and Statistics\\
California State University, Chico\\
Holt 181, 400 West First Street, Chico, California, 95929\\[1ex]

\noindent
Amanda Tucker\\
Department of Mathematics\\
University of Rochester\\
915 Hylan Building, P.O. Box 270138, Rochester, NY 14627
}

\end{document}